\documentclass[pdflatex]{sn-jnl}

\usepackage{defaults}
\usepackage{amsmath}
\usepackage{amssymb}
\usepackage{amsthm}
\usepackage{mathtools}
\usepackage{bbm}
\usepackage{xcolor}
\usepackage{hyperref}
\usepackage{subcaption}
\usepackage{cleveref}

\newcommand{\CC}{\mathbb{C}}
\newcommand{\CP}{\mathbb{C}\mathbb{P}}

\newcommand{\rmep}[1]{\mcl{A}(#1)}
\newcommand{\irmep}[1]{\mcl{B}(#1)}
\newcommand{\smep}[2][i]{\mcl{V}_{#1}(#2)}

\newcommand{\sol}[2][]{#2^{\circ #1}}
\newcommand{\eig}{\vc{\lambda}}
\newcommand{\bmu}{\vc{\mu}}
\newcommand{\bomega}{\vc{\omega}}
\newcommand{\heig}{\vctilde{\lambda}}
\newcommand{\rvec}{\vc{x}}
\newcommand{\lvec}{\vc{y}}
\newcommand{\rmat}{\mt{A}}
\newcommand{\irmat}{\mt{B}}

\newcommand{\homo}[1]{H(#1, \heig, \rvec)}
\newcommand{\elem}[2]{b_{#2}^{(#1)}}

\renewcommand{\arg}{\mathop{\rm arg}\nolimits}
\renewcommand{\herm}{*}

\theoremstyle{plain} 
\newtheorem{theorem}{Theorem}[section]
\newtheorem{corollary}[theorem]{Corollary}

\newtheorem{lemma}[theorem]{Lemma}
\theoremstyle{definition} 
\newtheorem{definition}[theorem]{Definition}
\newtheorem{example}[theorem]{Example}
\theoremstyle{remark} 

\newtheorem{remark}[theorem]{Remark}

\makeatletter
\DeclareRobustCommand{\doiurl}[1]{}
\makeatother

\begin{document}
    \title[Homotopy for Rectangular Multiparameter Eigenvalue Problems]{Homotopy for Rectangular Multiparameter Eigenvalue Problems}

    \author*[1,2]{\fnm{Bor} \sur{Plestenjak}}\email{bor.plestenjak@fmf.uni-lj.si}

    \author[3]{\fnm{Christof} \sur{Vermeersch}}\email{christof.vermeersch@esat.kuleuven.be}

    \affil*[1]{\orgdiv{Faculty of Mathematics and Physics}, \orgname{University of Ljubljana}, \orgaddress{\street{Jadranska 19}, \city{1000 Ljubljana}, \country{Slovenia}}}
    \affil*[2]{\orgname{Institute of Mathematics, Physics and Mechanics}, \orgaddress{\street{Jadranska 19}, \city{1000 Ljubljana}, \country{Slovenia}}}

    \affil[3]{\orgdiv{Center for Dynamical Systems, Signal Processing, and Data Analytics (STADIUS), Department of Electrical Engineering (ESAT)}, \orgname{KU Leuven}, \orgaddress{\street{Kasteelpark Arenberg 10}, \city{Leuven}, \postcode{3001}, \country{Belgium}}}

    \abstract{
        The rectangular multiparameter eigenvalue problem (rMEP) is a recently introduced form of an
        eigenvalue problem, in which we are looking for the combinations of parameters where  the multivariate rectangular matrix
        loses full column rank.
        Existing numerical methods for rMEPs typically require substantial memory and are not suitable for problems with a large number of eigenvalues.
        We propose a novel numerical approach based on the homotopy method that has low memory requirements and
        can compute all eigenvalues of an rMEP.
        Moreover, the homotopy method is naturally parallelizable, providing a simple and efficient way to accelerate computations.
        An implementation, together with a collection of test examples, is available in a public repository.
    }

    \keywords{rectangular multiparameter eigenvalue problem, homotopy method, system identification applications}

    \pacs[MSC Classification]{65F15, 65H20, 15A18, 15A22, 93B15}

\pacs[Funding]{The work of the first author was partially supported by the Slovenian Research and Innovation Agency (grant P1-0294).}

    \maketitle

    \section{Introduction}
        \label{sec:introduction}

        In contrast to standard multiparameter eigenvalue problems with square coefficient matrices, interest in solving rectangular problems is relatively recent and
        has been motivated by problems in system identification~\cite{vermeersch2019globally} and model order reduction~\cite{lagauw2023globally}. For these applications, the coefficient matrices can become very large, creating a need for efficient numerical methods
        with modest memory requirements.

        Homotopy methods provide such an approach. They have been successfully used to solve one-parameter eigenvalue problems~\cite{li1993homotopy, li1987homotopy} and, more recently, standard square multiparameter eigenvalue problems~\cite{DongYuYu_SIMAX, P_ContRD2EP,  Rodriguez_Fiber_Homotopy}.
        In this paper, we extend these methods to rectangular multiparameter eigenvalue problems. The extension is not straightforward, with the main challenge being the construction of a suitable initial problem whose eigenpairs are known and from which all solution paths can be tracked.

        Before outlining the paper and highlighting its contributions (\cref{sec:outline}), we recall the definitions of the rectangular (\cref{sec:rmep}) and square (\cref{sec:smep}) multiparameter eigenvalue problems.

        \subsection{Rectangular multiparameter eigenvalue problems}
            \label{sec:rmep}

            \begin{definition}
                A \emph{rectangular multiparameter eigenvalue problem (rMEP)} with $k \ge 2$
                parameters has the form
                \begin{equation}
                    \label{rectmep}
                    \rmep{\eig} \rvec \coloneq \sum_{\left\lbrace\bomega\right\rbrace} {\eig}^{\bomega} \rmat_{\vc{\omega}} \rvec = \vc{0},
                \end{equation}
                where $\vc{\omega} = (\omega_1, \ldots, \omega_k)$ is a multi-index and $\rmat_{\vc{\omega}} = \rmat_{\omega_1, \ldots, \omega_k}$ is an $(n + k - 1) \times n$ coefficient matrix over $\CC$.
                A tuple $\sol{\eig} = (\sol{\lambda_1}, \ldots, \sol{\lambda_k}) \in \CC^k$ is an \emph{eigenvalue} of~\eqref{rectmep} if $\rmep{\sol{\eig}} \sol{\rvec} = \vc{0}$ for a nonzero $\sol{\rvec} \in \CC^n$; such $\sol{\rvec}$ is the corresponding (right) \emph{eigenvector}.
            \end{definition}

            We assume that $\mcl{A}$ is regular, which means that the normal rank of $\mcl{A}$, defined as
            $
                \nrank(\mcl{A}) \coloneq \max_{\eig\in \CC^k} \, \rank(\rmep{\eig}),
            $
            is full, i.e., $\nrank(\mcl{A}) = n$.
            Then $\sol{\eig}\in \CC^k$ is an eigenvalue of~\eqref{rectmep} if and only if $\rank(\rmep{\sol{\eig}}) < n$.

            The degree $d$ of~\eqref{rectmep} is the maximal total degree of all monomials ${\eig}^{\bomega}$ in~\eqref{rectmep}, i.e., $d = \max_{\vc{\omega}} (\omega_1 + \cdots + \omega_k)$.
            We know that a generic rMEP of degree $d$ has
            \begin{equation}
                \label{number_of_solutions}
                d^k \binom{n + k - 1}{k}
            \end{equation}
            distinct eigenvalues~\cite[Theorem 3]{HKP_NLAA_2023_RectMEP}.
            If $d = 1$, then~\eqref{rectmep} is a \emph{linear rMEP}, for which the previous bound on the number of solutions was proven in~\cite{ShapiroShapiro}.

            \begin{definition}
                \label{linrMEPB}
                A linear rMEP
                $
                    \rmep{\eig} \rvec \coloneq \left(\rmat_0 + \lambda_1 \rmat_1 + \cdots + \lambda_k \rmat_k\right) \rvec = \vc{0},
                $
                where $\rmat_i \in \CC^{(n + k - 1) \times n}$ for $i = 0, \ldots, k$,
                is
                \emph{finitely regular}  if $\rank(\lambda_1 \rmat_1 + \cdots + \lambda_k \rmat_k)=n$ for all $\eig \ne \vc{0}$.
            \end{definition}

            It follows from~\cite[Lemma 1]{ShapiroShapiro} that a finitely regular linear rMEP has exactly $\binom{n+k-1}{k}$ finite eigenvalues counting multiplicities, which generalizes a well-known property that an $n \times n$ generalized eigenvalue problem (GEP) $\rmat \rvec=\lambda \irmat \rvec$ with nonsingular $\irmat$ has exactly $n$ finite eigenvalues.

            Existing numerical methods for rMEPs
            rely on structured matrices.
            The first approach is based on block Macaulay matrices~\cite{VermeerschDeMoorLAA}, while the second one uses operator determinants with the Kronecker product~\cite{HKP_NLAA_2023_RectMEP}.
            Although they seem to be unrelated, both approaches solve the rMEP~\eqref{rectmep} by constructing an associated family of GEPs
            \begin{equation}
                \label{deltamep}
                \mt{G}_i \vc{z} = \lambda_i \mt{G}_0 \vc{z}, \quad i = 1, \ldots, k,
            \end{equation}
            such that $\mt{G}_0$ is nonsingular, the matrices $\mt{G}_0^\inv \mt{G}_1, \ldots, \mt{G}_0^\inv \mt{G}_k$ commute, and $\sol{\eig} = (\sol{\lambda_1}, \ldots, \sol{\lambda_k})$ is an eigenvalue of~\eqref{rectmep} if and only if
            $(\sol{\lambda_1}, \ldots, \sol{\lambda_k})$ is a joint eigenvalue of~\eqref{deltamep}.
            Since the size of the matrices $\mt{G}_i$ in~\eqref{deltamep} is given by~\eqref{number_of_solutions}, and intermediate matrices used in the construction can be even larger, these methods can be applied in practice only to problems for which the number of solutions~\eqref{number_of_solutions} is not too large.

            Suppose that the rMEP is so large that, due to memory constraints, it is not possible to explicitly compute the coefficient matrices of the associated family of GEPs~\eqref{deltamep}.
            To compute all eigenvalues, a different approach with smaller memory requirements
            is needed.
            Tackling that issue is the goal of the novel homotopy method in the paper.

        \subsection{Standard square multiparameter eigenvalue problems}
            \label{sec:smep}

            \begin{definition}
                A standard \emph{square multiparameter eigenvalue problem (sMEP)} with $k \ge 2$ 
                parameters is given by $k$ equations of the form
                \begin{equation}
                    \label{problem}
                    \smep{\eig} \rvec_i \coloneq \left(\mt{V}_{i0} + \lambda_1 \vc{V}_{i1} + \cdots + \lambda_k \vc{V}_{ik}\right) \rvec_i = \vc{0}, \quad i = 1, \ldots, k,
                \end{equation}
                where $\mt{V}_{ij}\in \CC^{n_i\times n_i}$ is a square matrix.
                If $\smep{\sol{\eig}} \sol{\rvec_i} = \vc{0}$ for $i = 1, \ldots, k$ and nonzero $\sol{\rvec_i} \in \CC^{n_i}$,
                then
                $\sol{\eig} = (\sol{\lambda_1}, \ldots, \sol{\lambda_k}) \in \CC^k$ is an \emph{eigenvalue} of~\eqref{problem} and
                $\sol{\rvec} = \sol{\rvec_1} \otimes \cdots \otimes \sol{\rvec_k}$, where $\otimes$ denotes the Kronecker product,
                is the corresponding
                \emph{eigenvector}.
            \end{definition}

            A generic sMEP~\eqref{problem} has $n_1 \cdots n_k$ eigenvalues that are the common roots of the system of $k$ multivariate characteristic polynomial equations $\det(\smep{\eig}) = 0$ for $i = 1, \ldots, k$.
            The sMEP~\eqref{problem} is related to an associated system of GEPs,
            \begin{equation}
                \label{drugi}
                \Delta_i \rvec = \lambda_i \Delta_0 \rvec, \quad i = 1, \ldots, k,
            \end{equation}
            where the $(n_1 \cdots n_k) \times (n_1 \cdots n_k)$ matrices
            \[
                \Delta_0 =
                \left|\begin{matrix}
                    \mt{V}_{11} & \cdots & \mt{V}_{1k} \\[-0.5mm]
                    \vdots & & \vdots \\[-0.5mm]
                    \mt{V}_{k1} & \cdots & \mt{V}_{kk}
                \end{matrix}\right|_\otimes \coloneq  \sum_{\sigma\in S_k}{\rm sgn}(\sigma) \, \mt{V}_{1\sigma_1}\otimes \mt{V}_{2\sigma_2}\otimes \cdots \otimes \mt{V}_{k\sigma_k},
            \]
            and
            \[
                \Delta_i = -\
                \left|\begin{matrix}
                    \mt{V}_{11} & \cdots & \mt{V}_{1,i-1} & \mt{V}_{10} & \mt{V}_{1,i+1} & \cdots & \mt{V}_{1k} \\[-0.5mm]
                    \vdots & & \vdots & \vdots & \vdots & & \vdots \\[-0.5mm]
                    \mt{V}_{k1} & \cdots & \mt{V}_{k,i-1} & \mt{V}_{k0} & \mt{V}_{k,i+1} & \cdots & \mt{V}_{kk}
                \end{matrix}\right|_\otimes,\quad i=1,\ldots,k,
            \]
            are called \emph{operator determinants}, for more details, see, e.g.,~\cite{AtkinsonBook}.
            If $n_1 \cdots n_k$ is too large, then again, due to memory limitations, it is not possible to explicitly construct
            the operator determinants $\Delta_i$ of
            the associated GEPs~\eqref{drugi}.
            A numerical method with low memory requirements that can compute all eigenvalues of such large sMEPs is the homotopy method, see, e.g.,~\cite{DongYuYu_SIMAX} and~\cite{Rodriguez_Fiber_Homotopy}.

        \subsection{Outline and contributions}
            \label{sec:outline}

            Before discussing the novel homotopy method, we give some new theoretical results on
             eigenvalues of rMEPs in \cref{sec:auxiliaryresults}.
            Afterwards, we consider the homotopy method for linear and polynomial rMEPs in \cref{sec:linear} and \cref{sec:polynomial}, respectively.
            In \cref{sec:numericalexamples}, we test our numerical method on different rMEPs with a zero-dimensional and a positive-dimensional solution set, and show that the homotopy method
            can be applied to
            large rMEPs.
            Finally, we conclude the paper and outline directions for future work in \cref{sec:conclusion}.

            To the best of our knowledge, the paper presents the first application of a homotopy method to
            an rMEP.
            In addition, we demonstrate that the homotopy method is more efficient, sometimes even by a large margin, than the methods from~\cite{VermeerschDeMoorLAA} and~\cite{HKP_NLAA_2023_RectMEP}, which are implemented in~\cite{macaulaylab} and~\cite{multipareig_29}, respectively.

    \section{Auxiliary results}
        \label{sec:auxiliaryresults}

        In this section, we introduce some results on eigenvalues and eigenvectors of rMEPs that we need later when constructing the homotopy.
        The results in \cref{thm:polysimpleeig} and Corollary \ref{thm:simpleeig} are, to our knowledge, new and have not been published before.
        They complement similar results for sMEPs and give additional insight into the properties of rMEPs, for which the theory is much less developed  than for sMEPs.

        \subsection{Left null space and left eigenvectors}
            \label{sec:lefteigenvectors}

            Let $\sol{\eig}\in\CC^k$ be an eigenvalue of a regular rMEP~\eqref{rectmep}.
            Its geometric multiplicity is
            $\dim(\ker(\rmep{\sol{\eig}}))$ and we say that $\sol{\eig}$ is geometrically simple if $\dim(\ker(\rmep{\sol{\eig}})) = 1$.
            Since the $(n + k - 1 ) \times n$ coefficient matrices of an rMEP are rectangular and tall, there exist
            nonzero polynomial vectors $\lvec(\eig)$ such that
            $ \lvec(\eig)^\tr \rmep{\eig} \equiv 0$, see \cite[Lemma 3.1]{Khazanov_SpectralPropMEP_1995}.
            The set of all these vectors forms the left null space
            \[
                {\cal N}_l(\mcl{A}) \coloneq \big\{\lvec(\eig) \in \CC[\eig]^{n + k - 1}: \lvec(\eig)^\tr \rmep{\eig} \equiv \vc{0}\big\}.
            \]
            %
            The left singular space of $\mcl{A}$ at $\bmu \in \CC^k$, which is a subset
            of $\ker(\rmep{\bmu}^\herm)$,
            is then
            \[
                \ker_{\bmu}(\mcl{A}) \coloneq \big\{\overline{\lvec(\bmu)}: \lvec(\eig)\in{\cal N}_l(\mcl{A})\big\}
            \]
            and $\dim(\ker_{\bmu}(\mcl{A}))\le k-1$.
            If $\sol{\eig}$ is an eigenvalue, then $\dim(\ker(\rmep{\sol{\eig}}^\herm)) \ge k$ and each vector $\lvec \not\in \ker_{\sol{\eig}}(\mcl{A})$ such that $\lvec^\herm \rmep{\sol{\eig}} = \vc{0}$ represents a left eigenvector.
            If $\sol{\eig}$ is geometrically simple, then
            $k$ linearly independent vectors $\vc{z}_1, \ldots, \vc{z}_k$ exist such that $\vc{z}_i^\herm \rmep{\sol{\eig}} = \vc{0}$ for $i = 1, \ldots, k$.

        \subsection{Nonsingularity of the Jacobian matrix}
            \label{sec:jacobian}

            Let $f_{j_1, \ldots, j_n}(\eig) \coloneq \det(\mcl{A}_{j_1, \ldots, j_n}(\eig))$ be an $n \times n$ minor of $\rmep{\eig}$ from the rows $j_1, \ldots, j_n$, where $1 \le j_1 < \ldots <j_n \le n + k - 1$.
            Notice that there are $\binom{n + k - 1}{n}$ minors of this kind.
            Then $\sol{\eig} \in \CC^k$ is an eigenvalue of $\rmep{\eig}$, see, e.g.,~\cite{Khazanov_SpectralPropMEP_1995}, if and only if
            \begin{equation}
                \label{eq:eig_system_minors}
                f_{j_1, \ldots, j_n}(\sol{\eig}) = 0 \quad \textrm{for all }\ 1 \le j_1 < \ldots <j_n \le n + k - 1.
            \end{equation}
            The eigenvalue $\sol{\eig}$ is algebraically simple
            if and only if the Jacobian of~\eqref{eq:eig_system_minors} has full rank at $\sol{\eig}$, where the Jacobian is an $\binom{n+k-1}{n} \times k$ matrix with rows
            \begin{equation}
                \label{part_der_minors}
                \left[\begin{matrix}
                    \frac{\partial f_{j_1,\ldots,j_n}}{\partial \lambda_1}(\eig) & \cdots & \frac{\partial f_{j_1,\ldots,j_n}}{\partial \lambda_k}(\eig)
                \end{matrix}\right],\quad 1\le j_1<\ldots<j_n\le n+k-1.
            \end{equation}
            Each algebraically simple eigenvalue is also geometrically simple, while the opposite is not always true~\cite{Khazanov_SpectralPropMEP_1995}.
            The following theorem gives a simple condition that a geometrically simple eigenvalue must satisfy to also be algebraically simple.

            \begin{theorem}
                \label{thm:polysimpleeig}
                Let $\sol{\eig} \in \CC^k$ be an algebraically simple eigenvalue of a regular rMEP $\rmep{\eig}$.
                If $\sol{\rvec} \in \CC^n$ is the corresponding right eigenvector, then the matrix
                \begin{equation}
                    \label{genB0_condition}
                    \left[\begin{matrix}
                        \vc{z}_1^\herm \frac{\partial {\bm{\mathcal A}}}{\partial \lambda_1} ({\sol{\eig}})\sol{\rvec} & \cdots & \vc{z}_1^\herm \frac{\partial {\bm{\mathcal A}}}{\partial \lambda_k} ({\sol{\eig}})\sol{\rvec} \\
                        \vdots & & \vdots \\
                        \vc{z}_k^\herm \frac{\partial {\bm{\mathcal A}}}{\partial \lambda_1} ({\sol{\eig}})\sol{\rvec} & \cdots & \vc{z}_k^\herm \frac{\partial {\bm{\mathcal A}}}{\partial \lambda_k} ({\sol{\eig}})\sol{\rvec}
                    \end{matrix}\right]
                \end{equation}
                is nonsingular for any $k$ linearly independent vectors $\vc{z}_1, \ldots, \vc{z}_k$ from $\ker(\rmep{\sol{\eig}}^\herm)$.
            \end{theorem}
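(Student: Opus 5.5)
The plan is to argue by contraposition. We assume that the matrix \eqref{genB0_condition} is singular for some choice of $k$ linearly independent vectors $\vc{z}_1,\ldots,\vc{z}_k\in\ker(\rmep{\sol{\eig}}^\herm)$. We then produce a nonzero direction $\vc{v}\in\CC^k$ along which every minor $f_{j_1,\ldots,j_n}$ has vanishing directional derivative at $\sol{\eig}$. This says the Jacobian \eqref{part_der_minors} annihilates $\vc{v}$, so it does not have full rank, contradicting algebraic simplicity.

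First we fix the dimensions. An algebraically simple eigenvalue is geometrically simple, so $\rank(\rmep{\sol{\eig}})=n-1$ and $\ker(\rmep{\sol{\eig}})=\mathrm{span}\{\sol{\rvec}\}$. Hence $\dim\ker(\rmep{\sol{\eig}}^\herm)=(n+k-1)-(n-1)=k$, and any $k$ linearly independent $\vc{z}_i$ form a basis of it. The orthogonal complement of $\mathrm{span}\{\vc{z}_1,\ldots,\vc{z}_k\}$ is therefore exactly $\mathrm{range}(\rmep{\sol{\eig}})$. Now suppose \eqref{genB0_condition} is singular, with a nonzero null vector $\vc{v}=(v_1,\ldots,v_k)$. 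Set $D_{\vc v}\coloneq\sum_j v_j \frac{\partial \bm{\mathcal A}}{\partial\lambda_j}(\sol{\eig})$. Then $\vc{z}_i^\herm D_{\vc v}\sol{\rvec}=0$ for all $i$, so $D_{\vc v}\sol{\rvec}=\rmep{\sol{\eig}}\vc{u}$ for some $\vc{u}\in\CC^n$.

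Next we compute the directional derivative of each minor using Jacobi's formula. For a row index set $J=\{j_1,\ldots,j_n\}$ write $\mcl{A}_J$ for the $n\times n$ submatrix of $\rmep{\eig}$ with rows $J$. Then
\[
\tfrac{d}{dt} f_J(\sol{\eig}+t\vc{v})\big|_{t=0}=\operatorname{tr}\big(\operatorname{adj}(\mcl{A}_J(\sol{\eig}))\,(D_{\vc v})_J\big).
\]
The key step is describing $\operatorname{adj}(\mcl{A}_J(\sol{\eig}))$; we split into two cases.
\begin{itemize}
\item If $\rank \mcl{A}_J(\sol{\eig})<n-1$, the adjugate is zero and the derivative vanishes trivially.
\item Otherwise $\rank\mcl{A}_J(\sol{\eig})=n-1$. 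Since $\mcl{A}_J(\sol{\eig})\operatorname{adj}(\mcl{A}_J(\sol{\eig}))=f_J(\sol{\eig})I=0$, the columns of the adjugate lie in $\ker\mcl{A}_J(\sol{\eig})$. This kernel contains $\ker\rmep{\sol{\eig}}$ and is one-dimensional, so it equals $\mathrm{span}\{\sol{\rvec}\}$. Hence $\operatorname{adj}(\mcl{A}_J(\sol{\eig}))=\sol{\rvec}\,\vc{w}_J^\tr$ for some $\vc{w}_J$. Because $\operatorname{adj}(M)M=\det(M)I$ as well, $\vc{w}_J^\tr\mcl{A}_J(\sol{\eig})=\vc{0}$.
\end{itemize}
In the second case the derivative becomes
\[
\vc{w}_J^\tr (D_{\vc v})_J\sol{\rvec}=\vc{w}_J^\tr\big(D_{\vc v}\sol{\rvec}\big)_J=\vc{w}_J^\tr\mcl{A}_J(\sol{\eig})\vc{u}=0.
\]
In both cases every row \eqref{part_der_minors} of the Jacobian at $\sol{\eig}$ is orthogonal to $\vc{v}$, so the Jacobian has a nontrivial kernel. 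This contradicts algebraic simplicity.

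The step that needs the most care is the rank-one description of the adjugate in the second case. In particular we must confirm that the kernel of each square submatrix is spanned by the same $\sol{\rvec}$, which follows from $\ker\rmep{\sol{\eig}}\subseteq\ker\mcl{A}_J(\sol{\eig})$. We must also observe that row selection commutes with multiplication by $\sol{\rvec}$, so that $(D_{\vc v})_J\sol{\rvec}=(D_{\vc v}\sol{\rvec})_J=(\rmep{\sol{\eig}}\vc{u})_J=\mcl{A}_J(\sol{\eig})\vc{u}$. The remaining steps are linear algebra.
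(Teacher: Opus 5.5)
Your proof is correct, but it takes a different route from the paper.

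\textbf{The paper's route.} The paper picks $k$ minors whose gradients at $\lambda^\circ$ are linearly independent. It uses the corresponding row selections $P_1,\dots,P_k$ to project the rMEP to a square polynomial sMEP, for which $\lambda^\circ$ is again algebraically simple. It then imports the nonsingularity of the matrix with entries $y_i^* P_i^T \frac{\partial\mathcal{A}}{\partial\lambda_j}(\lambda^\circ)x^\circ$ from the sMEP theory. That result is cited only for $k=2$ and asserted to extend to general $k$. The conclusion is transferred back via $z_i=P_i y_i$ and a change of basis in $\ker(\mathcal{A}(\lambda^\circ)^*)$.

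\textbf{Your route.} You treat all minors at once. A null vector $v$ of \eqref{genB0_condition} forces $D_v x^\circ\in\operatorname{range}\mathcal{A}(\lambda^\circ)$. Jacobi's formula together with $\operatorname{adj}(\mathcal{A}_J(\lambda^\circ))=x^\circ w_J^T$ then shows that $v$ lies in the kernel of the Jacobian \eqref{part_der_minors}.

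\textbf{What your route buys.} It is self-contained: there is no auxiliary sMEP, no choice of minors, and no dependence on the unproved general-$k$ version of the sMEP lemma. The same computation also gives geometric simplicity for free. If the rank of $\mathcal{A}(\lambda^\circ)$ were at most $n-2$, every adjugate would vanish and the Jacobian would be zero, so you need not cite that fact.

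Your computation also yields a converse. Take $n-1$ independent rows $I$ and append each of the $k$ remaining rows in turn. The conjugated, zero-padded vectors $\overline{w_J}$ then span $\ker(\mathcal{A}(\lambda^\circ)^*)$. Hence the kernel of \eqref{genB0_condition} equals the kernel of the Jacobian. For a geometrically simple eigenvalue, nonsingularity of \eqref{genB0_condition} is therefore equivalent to algebraic simplicity.

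\textbf{What the paper's route buys.} It produces an explicit basis of $\ker(\mathcal{A}(\lambda^\circ)^*)$ from left eigenvectors of a projected sMEP. This fits the projection viewpoint used elsewhere in the paper.
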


            \begin{proof}
                Let $\mt{P}_{j_1, \ldots, j_n}=\left[\begin{matrix}e_{j_1} & e_{j_2} & \cdots & e_{j_n}\end{matrix}\right]$, where $1 \le j_1 < \ldots < j_n \le n+k-1$, be an $(n+k-1)\times n$ matrix
                whose columns are standard basis vectors.
                Then 
                $\det\big(\mt{P}_{j_1,\ldots,j_n}^\tr \rmep{\eig}\big) = f_{j_1,\ldots,j_n}(\eig)$.
                Since $\sol{\eig}$ is algebraically simple, the rank of the Jacobian~\eqref{part_der_minors} is full.
                Therefore, we can select $k$ matrices $\mt{P}_1,\ldots,\mt{P}_k$ of the above form that correspond to $k$ linearly independent rows of~\eqref{part_der_minors} at $\eig = \sol{\eig}$.
                When we multiply~\eqref{rectmep} by $\mt{P}_1^\tr, \ldots, \mt{P}_k^\tr$, we obtain a polynomial sMEP
                \begin{equation}
                    \label{rectmepprojB}
                    \mt{P}_i^\tr \rmep{\eig} \rvec_i = \bigg(\sum_{\left\lbrace\bomega\right\rbrace} {\eig}^{\bomega} \mt{P}_i^\tr \mt{A}_{\bomega} \bigg) \rvec_i = \vc{0}, \quad i = 1, \ldots, k,
                \end{equation}
                such that $\sol{\eig} = (\sol{\lambda_1},\ldots,\sol{\lambda_k})$ is an eigenvalue of~\eqref{rectmepprojB} and $\sol{\rvec} \otimes \cdots \otimes \sol{\rvec}$ is the corresponding eigenvector.
                Note that the eigenvalues of
                \eqref{rectmepprojB} are the common roots of
                \begin{equation}
                    \label{eq:det_Pi_M}
                    \det\big(\mt{P}_i^\tr \rmep{\eig}\big) = \vc{0},\quad  i = 1, \ldots, k,
                \end{equation}
                which is a subset of characteristic equations~\eqref{eq:eig_system_minors}.
                Therefore, the polynomial sMEP~\eqref{rectmepprojB} has additional eigenvalues that are not eigenvalues of the rMEP $\rmep{\eig}$.
                Since we have selected $\mt{P}_1,\ldots,\mt{P}_k$ from $k$ linearly independent rows of~\eqref{part_der_minors} at $\sol{\eig}$, the Jacobian of \eqref{eq:det_Pi_M} is nonsingular at $\sol{\eig}$ which is thus
                an algebraically simple eigenvalue of the polynomial sMEP~\eqref{rectmepprojB}.

                Let $\lvec_1 \otimes \cdots \otimes \lvec_k$ be a left eigenvector of $\sol{\eig}$ for
                \eqref{rectmepprojB}.
                Because $\sol{\eig}$ is algebraically simple,
                \begin{equation}
                    \label{MEP_B0_condition}
                    \left[\begin{matrix}
                        \lvec_1^\herm \mt{P}_1^\tr \frac{\partial {\bm{\mathcal A}}}{\partial \lambda_1}({\sol{\eig}}) \sol{\rvec} & \cdots & \lvec_1^\herm \mt{P}_1^\tr \frac{\partial {\bm{\mathcal A}}}{\partial \lambda_k} ({\sol{\eig}})\sol{\rvec} \\
                        \vdots & & \vdots \\
                        \lvec_k^\herm \mt{P}_k^\tr \frac{\partial {\bm{\mathcal A}}}{\partial \lambda_1}({\sol{\eig}}) \sol{\rvec} & \cdots & \lvec_k^\herm \mt{P}_k^\tr \frac{\partial {\bm{\mathcal A}}}{\partial \lambda_k}({\sol{\eig}}) \sol{\rvec}
                    \end{matrix}\right]
                \end{equation}
                is nonsingular. This follows from \cite[Proposition 3.2]{MP_SingMEP} for $k = 2$ and can be generalized to an arbitrary number of parameters in a straightforward way.

                Since $\lvec_i^\herm \mt{P}_i^\tr \rmep{\sol{\eig}} = \vc{0}$ for $i=1,\ldots,k$, vectors $\vc{z}_1 = \mt{P}_1 \lvec_1, \ldots, \vc{z}_k = \mt{P}_k \lvec_k$ belong to $\ker(\rmep{\sol{\eig}}^\herm)$.
                Clearly, the matrix~\eqref{genB0_condition} is singular if vectors $\vc{z}_1, \ldots, \vc{z}_k$ are linearly dependent, therefore, as the matrices in~\eqref{MEP_B0_condition} and~\eqref{genB0_condition} are equal and nonsingular, vectors $\vc{z}_1, \ldots, \vc{z}_k$ have to be linearly independent.
                This proves the theorem for a particular choice of $\vc{z}_1, \ldots, \vc{z}_k$
                and it is easy to see that the theorem then holds for any basis $\vc{z}_1, \ldots, \vc{z}_k$ of $\ker(\rmep{\sol{\eig}}^\herm)$.
            \end{proof}

            \Cref{thm:polysimpleeig} generalizes the well-known result
            which states that if $\sol{\lvec}$ and $\sol{\rvec}$ are the left and the right eigenvectors of a polynomial eigenvalue problem $\mcl{P}(\lambda) \rvec = \vc{0}$ for a simple eigenvalue $\sol{\lambda}$, then $(\sol{\lvec})^\herm \mcl{P}'(\sol{\lambda}) \rvec \ne 0$, see, e.g.,~\cite[Theorem 4.5]{LancasterLambdaMatrices}.

            \begin{corollary}
                \label{thm:simpleeig}
                Let $\sol{\eig}\in\CC^k$ be an algebraically simple eigenvalue of a regular linear rMEP.
                If $\sol{\rvec}$ is the right eigenvector and $\vc{z}_1, \ldots, \vc{z}_k$ are linearly independent vectors from $\ker(\rmep{\sol{\eig}}^\herm)$, then the matrix
                \begin{equation}
                    \label{genB0_conditionB}
                    \begin{bmatrix}
                        \vc{z}_1^\herm \rmat_1 \sol{\rvec} & \cdots & \vc{z}_1^\herm \rmat_k \sol{\rvec} \\
                        \vdots & & \vdots \\
                        \vc{z}_k^\herm \rmat_1 \sol{\rvec} & \cdots & \vc{z}_k^\herm \rmat_k \sol{\rvec}
                    \end{bmatrix}
                \end{equation}
                is nonsingular.
            \end{corollary}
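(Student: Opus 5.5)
This is a direct specialization of \cref{thm:polysimpleeig} to the linear case, so I will derive it from that theorem instead of redoing the projection argument. A regular linear rMEP $\rmep{\eig} = \rmat_0 + \lambda_1 \rmat_1 + \cdots + \lambda_k \rmat_k$ is a regular rMEP of degree $d=1$ of the form~\eqref{rectmep}, with multi-indices $\vc{0}$ and $\vc{e}_i$. Hence all hypotheses of \cref{thm:polysimpleeig} hold verbatim for an algebraically simple eigenvalue $\sol{\eig}$ with right eigenvector $\sol{\rvec}$.

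The one computation needed is the partial derivatives. Since $\rmep{\eig}$ is affine in each $\lambda_i$,
\[
\frac{\partial \bm{\mathcal A}}{\partial \lambda_i}(\eig) = \rmat_i, \quad i = 1,\ldots,k,
\]
and this is constant in $\eig$, so in particular it equals $\rmat_i$ at $\eig=\sol{\eig}$. Substituting into~\eqref{genB0_condition} gives entries $\vc{z}_j^\herm \rmat_i \sol{\rvec}$, which is exactly the matrix~\eqref{genB0_conditionB}.

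It remains to check that the admissible vectors $\vc{z}_1,\ldots,\vc{z}_k$ match. The theorem allows any $k$ linearly independent vectors of $\ker(\rmep{\sol{\eig}}^\herm)$, and the corollary assumes the same. Note that for an algebraically (hence geometrically) simple eigenvalue this kernel has dimension exactly $k$: rank--nullity gives $(n+k-1) - (n-1) = k$. So the $\vc{z}_j$ form a basis, and nonsingularity does not depend on which basis is chosen, since changing basis multiplies the matrix on the left by an invertible $k\times k$ matrix.

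There is no real obstacle. The only point to state explicitly is that the linear problem satisfies the standing assumptions of \cref{thm:polysimpleeig}, namely regularity and algebraic simplicity defined through the minors~\eqref{eq:eig_system_minors}. After that, the derivative identification makes the corollary immediate.
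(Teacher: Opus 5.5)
Your proposal is correct and matches the paper, which gives no separate proof: the corollary is the specialization of \cref{thm:polysimpleeig} to the linear case, where the partial derivatives of $\rmep{\eig}$ with respect to $\lambda_i$ are the constant matrices $\rmat_i$. Your remarks that $\dim\ker(\rmep{\sol{\eig}}^\herm)=k$ and that nonsingularity does not depend on the choice of basis are accurate, and they only spell out what the theorem's final sentence already covers.
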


    \section{Homotopy method for linear rMEPs}
        \label{sec:linear}

        Before developing a dedicated homotopy method for rMEPs, we want to point out that, because of the randomized sketching approach in~\cite{HKP_NLAA_2023_RectMEP}, all numerical methods for sMEPs can be applied to rMEPs as well.
        This also applies to the existing homotopy methods for sMEPs, such as the methods developed in~\cite{DongYuYu_SIMAX, Rodriguez_Fiber_Homotopy}.

        Suppose that we have a linear rMEP
        \begin{equation}
            \label{linrMEP}
            \rmep{\eig} \rvec = \left(\rmat_0 + \lambda_1 \rmat_1 + \cdots + \lambda_k \rmat_k\right) \rvec = \vc{0},
        \end{equation}
        where $\rmat_i \in \CC^{(n + k - 1) \times n}$ for $i = 0, \ldots, k$.
        As shown in~\cite[Theorem 1]{HKP_NLAA_2023_RectMEP}, if we multiply~\eqref{linrMEP} by random matrices $\mt{P}_1, \ldots, \mt{P}_k\in \CC^{n\times (n + k - 1)}$, we obtain a sMEP
        \begin{equation}
            \label{rectmepproj}
            \smep{\eig} \rvec_i = \left(\mt{P}_i \rmat_0 + \lambda_1 \mt{P}_i \rmat_1 + \ldots + \lambda_k \mt{P}_i \rmat_k\right) \rvec_i = \vc{0}, \quad i = 1, \ldots, k,
        \end{equation}
        such that if $(\eig,\rvec)$ is an eigenpair of~\eqref{linrMEP},
        then $(\eig,\rvec \otimes \cdots \otimes \rvec)$ is an
        eigenpair of~\eqref{rectmepproj}.
        So, when we compute all eigenvalues of~\eqref{rectmepproj}, for example, by using a homotopy method for sMEPs, we can extract the eigenvalues that are solutions of~\eqref{linrMEP}.

        The drawback of the above-described approach is that the projected problem~\eqref{rectmepproj} has $n^k$ eigenvalues, of which only $\binom{n+k-1}{k}$ are eigenvalues of~\eqref{linrMEP}.
        When $n^k\gg \binom{n+k-1}{k}$, this becomes highly inefficient.
        A homotopy method must first track $n^k$ paths to compute the eigenvalues of the sMEP,
        and then identify the eigenvalues of~\eqref{linrMEP} by evaluating the residuals.
        The discrepancy in size can be huge; for instance, for $n = 7$ and $k = 6$ problem~\eqref{rectmepproj} has $\num{117649}$ eigenvalues but only $\num{924}$ of them are  eigenvalues of~\eqref{linrMEP}. Clearly, it would be much more efficient to track only these $\num{924}$ paths.
        This motivates the development of a homotopy method tailored specifically to rMEPs.

        The common idea of all homotopy methods, see, e.g.,~\cite{Allgower-90-Continuation-Methods}, is to track solution paths of a continuously parameterized problem.
        Suppose that we want to solve a nonlinear equation $f(z) = 0$ that has $m$ solutions.
        We construct the initial nonlinear equation $g(z) = 0$ of the same type with the same number of solutions in such a way that we know its solutions $z_1, \ldots, z_m$.
        Next, we construct a homotopy $h(t, z)$ that depends smoothly on the parameter $t \in [0, 1]$, so that $h(0, z) = g(z)$ and $h(1, z) = f(z)$, e.g., 
        $h(t, z) \coloneq (1 - t) g(z) + t f(z)$.
        Assuming that there exist $m$ disjoint paths $z_i(t)$ such that $h(t, z_i(t)) = 0$ for $t \in [0,1]$, $z_i(0) = z_i$, we track them from $t = 0$ to $t = 1$ to obtain the solutions of $f(z) = 0$.

        To apply a homotopy method to the linear rMEP~\eqref{linrMEP}, we need to be able to construct an initial rMEP with the same structure and number of solutions that we can solve efficiently.
        Afterwards, the continuation procedure can track the different eigenpairs to retrieve the solutions of the original rMEP. We discuss this below.

        \subsection{Construction of an initial problem with known solutions}

            We construct the initial rMEP as
            \begin{equation}
                \label{initial}
                \irmep{\eig} \rvec = \left(\irmat_0 + \lambda_1 \irmat_1 + \cdots + \lambda_k \irmat_k\right) \rvec = \vc{0},
            \end{equation}
            where $\irmat_0$ is a banded matrix with random entries on the main diagonal and the next $k - 1$ subdiagonals, i.e., $[\irmat_0]_{ij} = 0$ for $i < j$ or $i - j > k -1$, and $\irmat_i$ are shifted identity matrices, i.e.,
            \begin{equation}
                \label{matBi}
                \irmat_i = \begin{bmatrix}
                    \mt{0}_{(i - 1) \times n} \\
                    \mt{I}_n \\
                    \mt{0}_{(k - i) \times n}
                \end{bmatrix}, \quad i = 1, \ldots, k,
            \end{equation}
            where $\mt{0}_{m \times n}$ and $\mt{I}_n$ are $m \times n$ zero matrices and $n \times n$ identity matrices, respectively.
            By construction, rMEPs of the above form are finitely regular and generic, see~\cite{ShapiroShapiro}.
            Therefore, we can expect that~\eqref{initial} has $\binom{n+k-1}{k}$ simple eigenvalues.
            We now explain how we can retrieve these solutions.

            \subsubsection{Two parameters}

                For $k = 2$ we can simply write down the eigenvalues and eigenvectors of~\eqref{initial}.
                Denote the elements on the $i$th diagonal of $\irmat_0$ as $\elem{i}{1}, \ldots, \elem{i}{n}$.
                The initial problem is given by
                \begin{equation}
                    \label{eq:twopolynomialmatrix}
                    \irmat_0 + \lambda_1 \irmat_1 + \lambda_2 \irmat_2 = \begin{bmatrix}
                        \elem{0}{1} + \lambda_1 & 0 & \cdots & 0 \\
                        \elem{1}{1} + \lambda_2 & \elem{0}{2} + \lambda_1 & \ddots & \vdots \\
                        0 & \ddots & \ddots & 0 \\
                        \vdots & \ddots & \elem{1}{n - 1} + \lambda_2 & \elem{0}{n} +\lambda_1 \\
                        0 & \cdots & 0 & \elem{1}{n} + \lambda_2
                    \end{bmatrix}.
                \end{equation}
                The rank of
                \eqref{eq:twopolynomialmatrix} is below $n$ if we set a
                diagonal element and a subdiagonal element in a lower row to zero.
                Then, it is easy to see that the eigenvalues of~\eqref{initial} are
                \[
                    (\lambda_1, \lambda_2) = \big(-\elem{0}{j}, -\elem{1}{\ell}\big), \quad 1 \le j \le \ell \le n,
                \]
                which gives the $\binom{n+1}{2}$ eigenvalues that we are looking for.
                Notice that all eigenvalues are simple for generic elements of $\irmat_0$, i.e., for generic $\elem{j}{i}$, $i = 1, \ldots, n$ and $j =0, 1$.

            \subsubsection{Three parameters}

                For $k = 3$ we have to solve multiple standard (one-parameter) eigenvalue problems.
                The initial problem is now given by $\irmat_0 + \lambda_1 \irmat_1 + \lambda_2 \irmat_2 + \lambda_3 \irmat_3 =$
                \begin{equation}
                    \label{inihomo3}
                    \begin{bmatrix}
                        \elem{0}{1} + \lambda_1 & 0 & 0 & \cdots & 0 \\
                        \elem{1}{1} + \lambda_2 & \elem{0}{2} +\lambda_1 & 0 & \cdots & 0 \\
                        \elem{2}{1} + \lambda_3 & \elem{1}{2} + \lambda_2 & \elem{0}{3} +\lambda_1 & \ddots & \vdots \\
                        0 & \ddots & \ddots & \ddots & 0 \\
                        0 & \ddots & \elem{2}{n - 2} + \lambda_3 & \elem{1}{n - 1} + \lambda_2 & \elem{0}{n} + \lambda_1 \\
                        \vdots & \ddots & 0 & \elem{2}{n - 1} + \lambda_3 & \elem{1}{n} +\lambda_2 \\
                        0 & \cdots & 0 & 0 & \elem{2}{n} + \lambda_3
                    \end{bmatrix}.
                \end{equation}
                Similarly as for $k = 2$, we set $\lambda_1 = -\elem{0}{j}$ and $\lambda_3 = -\elem{2}{\ell}$ for
                $1 \le j\le\ell \le n$ to zero out one element on the upper diagonal and one on the lower diagonal from different rows in~\eqref{inihomo3}.
                The rank of \eqref{inihomo3} is then smaller than $n$ if the submatrix
                with rows $j + 1, \ldots, \ell+1$ and columns $j, \ldots, \ell$ is singular, i.e., when
                \[
                    \det\left(\begin{bmatrix}
                        \elem{1}{j} & \elem{0}{j + 1} - \elem{0}{j} & 0 & \cdots & 0 \\
                        \elem{2}{j} - \elem{2}{l} & \elem{1}{j + 1} & \elem{0}{j + 2} - \elem{0}{j} & \ddots & \vdots \\
                        0 & \ddots & \ddots & \ddots & 0 \\
                        \vdots & \ddots & \elem{2}{l - 2} - \elem{2}{l} & \elem{1}{\ell - 1} & \elem{0}{\ell} - \elem{0}{j} \\
                        0 & \cdots & 0 & \elem{2}{l-1} - \elem{2}{l} & \elem{1}{\ell}
                    \end{bmatrix} + \lambda_2 \mt{I}_{\ell - j+1}\right) = 0.
                \]
                By solving this square standard eigenvalue problem, we get $\ell - j +1$ values of $\lambda_2$ for each pair
                $(\lambda_1, \lambda_3) = \big(-\elem{0}{j}, -\elem{2}{\ell}\big)$ for $1 \le j \le \ell \le n$.
                Consequently, the obtained tuples $(\lambda_1, \lambda_2, \lambda_3)$ correspond to all $\binom{n+2}{3}$ eigenvalues of the initial problem $\irmep{\eig}$.

            \subsubsection{More than three parameters}

                In general, when $k \ge 4$, we apply a similar approach as for $k = 3$, but with the difference that we need to solve several smaller $(k - 2)$-parameter linear rMEPs to find the numerical values of $\lambda_2, \ldots, \lambda_{k - 1}$.
                We provide the construction in \cref{alg:initialproblem}.
                For each $\lambda_1 = -\elem{0}{j}$ and $\lambda_k = -\elem{k - 1}{\ell}$ with $1 \le j \le \ell \le n$, we construct a 
                submatrix of~\eqref{initial} composed of rows $j + 1, \ldots, \ell + k - 2$
                and columns $j, \ldots, \ell$.
                Each of the $(k - 2)$-parameter linear rMEPs has $r = \binom{\ell-j+k-2}{k-2}$ solutions.

                \begin{algorithm}[h]
                    \caption{Construct an initial linear rMEP with known solutions}
                    \label{alg:initialproblem}
                    \begin{algorithmic}[1]
                        \Require $n$: number of columns, $k$: number of parameters
                        \Ensure matrices $\irmat_0, \ldots, \irmat_k$ and eigenpairs
                        $\left(\eig^{(j)}, \rvec^{(j)}\right)$, $j=1,\ldots,\binom{n+k-1}{k}$
                        \State Construct banded matrix $\irmat_0$ with random entries $\elem{j}{i}$, $i = 1, \ldots, n$, $j = 0, \ldots, k - 1$
                        \State Construct shifted identity matrices $\irmat_1, \ldots, \irmat_k$ as in~\eqref{matBi}
                        \For{$j = 1, \ldots, n$}
                            \For{$\ell = j, \ldots, n$}
                                \State $\mttilde{B}_i = \irmat_i(j + 1:\ell + k - 2, j:\ell)$ for $i = 0, \ldots, k$
                                \State Compute eigenpairs $\left(\vc{\mu}^{(p)}, \vc{z}^{(p)}\right)$, $p = 1, \ldots, r$, of the smaller rMEP
                                \[
                                    \left(\mttilde{B}_0 - \elem{0}{j} \mttilde{B}_1 - \elem{k - 1}{\ell} \mttilde{B}_k +\sum_{i = 2}^{k - 1} \mu_i \mttilde{B}_i\right) \vc{z} = \vc{0}
                                \]
                                \State Set $\eig^{(p)} = \left(-\elem{0}{j}, \vc{\mu}^{(p)}, -\elem{k - 1}{\ell}\right)$ and $\rvec^{(p)} = \begin{bmatrix} \vc{0}_{j-1} & {\vc{z}^{(p)}}^\tr & \vc{0}_{n-\ell} \end{bmatrix}^\tr$
                                \State Add eigenpairs $\left(\eig^{(p)}, \rvec^{(p)}\right)$, $p = 1, \ldots, r$, to the list of solutions
                            \EndFor
                        \EndFor
                        \State \Return $\irmat_0, \ldots, \irmat_k$ and $\left(\eig^{(j)}, \rvec^{(j)}\right)$, $j=1,\ldots,\binom{n+k-1}{k}$
                    \end{algorithmic}
                \end{algorithm}

                Although the structure of the initial problem admits an efficient construction of the associated joint GEP in~\eqref{deltamep}, a recursive solution approach via a homotopy method is for most combinations of $n$ and $k$ the most efficient.
                Applying this recursively until $k < 4$ yields the solutions of the initial problem.

        \subsection{Path continuation procedure}

            For the homotopy, we consider the homogeneous version of the original rMEP in~\eqref{linrMEP},
            \begin{equation}
                \label{linrMEPhomo}
                \rmep{\heig} \rvec = \left(\lambda_0 \rmat_0 + \lambda_1 \rmat_1 + \cdots + \lambda_k \rmat_k\right) \rvec = \vc{0},
            \end{equation}
            in which $\heig = (\lambda_0, \lambda_1, \ldots, \lambda_k)$ are coordinates in the projective space and thus $\heig \ne \vc{0}$.
            In the homogeneous setting, eigenvalues of \eqref{linrMEPhomo} with $\lambda_0 = 0$ correspond to infinite eigenvalues of \eqref{linrMEP} and each finite eigenvalue $(\lambda_1, \ldots, \lambda_k)$ of \eqref{linrMEP} corresponds to the eigenvalue $(1, \lambda_1, \ldots, \lambda_k)$ of \eqref{linrMEPhomo}.
            We transform the initial problem~\eqref{initial} in a similar way into a homogeneous linear rMEP,
            \begin{equation}
                \label{initialhomo}
                \irmep{\heig} \rvec = \left(\lambda_0 \irmat_0 + \lambda_1 \irmat_1 + \cdots + \lambda_k \irmat_k\right) \vc{x} = \vc{0}.
            \end{equation}

            Now we
            define $\mt{H}_i(t) = (1 - t) \gamma \irmat_i + t \rmat_i$ for $i = 0, \ldots, k$, where $\gamma\ne 0$ is a random complex number, let $\mcl{H}(t,\heig) = \sum_{i = 0}^k \lambda_i \mt{H}_i(t)$,
            and construct the homotopy
            as
            \begin{equation}
                \label{homoABC}
                \homo{t} = \begin{bmatrix}
                        \mcl{H}(t,\heig) \rvec \\
                        \heig^\herm \heig - 1 \\
                        \rvec^\herm \rvec - 1
                    \end{bmatrix} = \begin{bmatrix}
                    \displaystyle{\sum_{i = 0}^k \lambda_i \left((1 - t) \gamma \irmat_i + t \rmat_i\right) \rvec} \\
                    \heig^\herm \heig - 1 \\
                    \rvec^\herm \rvec - 1
                \end{bmatrix}.
            \end{equation}
            The introduction of $\gamma$ is a standard
            approach in the homotopy methods
            that prevents path intersections, see, e.g., \cite{DongYuYu_SIMAX} and Section \ref{sec:homotopy_details}.
            The last two rows in~\eqref{homoABC} normalize the eigenvector $\rvec$ and the homogeneous eigenvalue $\heig$, which both have to be nonzero.
            Clearly, $\mcl{H}(0,\heig) = \gamma \irmep{\heig}$ and
            $\mcl{H}(1,\heig) =  \rmep{\heig}$.
            As $t$ moves from $0$ to $1$, the initial rMEP~\eqref{initialhomo} smoothly changes to the  original
            rMEP~\eqref{linrMEPhomo} that we want to solve.

        We outline the entire homotopy method in \cref{alg:homotopy} and give additional details below. For a visual summary, see Figs.~\ref{fig:Example1_Intro} and
        \ref{fig:homotopy} in Section \ref{sec:numericalexamples}.

        \begin{algorithm}[h]
            \caption{Solve a linear rMEP via homotopy}
            \label{alg:homotopy}
            \begin{algorithmic}[1]
                \Require $\rmat_0, \ldots, \rmat_k$: coefficient matrices of $\rmep{\eig}$
                \Ensure $\left(\eig^{(j)}, \rvec^{(j)}\right)$ for $j=1,\ldots,\binom{n+k-1}{k}$: eigenpairs
                \State Construct initial linear rMEP $\irmep{\eig}$ with known solutions (\cref{alg:initialproblem})
                \State Homogenize $\rmep{\eig}$ and $\irmep{\eig}$, and construct homotopy $\homo{t}$ as in~\eqref{homoABC}
                \For{each solution $\left(\eig^{(j)}, \rvec^{(j)}\right)$ of $\irmep{\eig} \rvec=\vc{0}$}
                    \State Set $t = 0$, $\eig(0) = \eig^{(j)}$, and $\rvec(0) = \rvec^{(j)}$
                    \State Choose stepsize $h$
                    \While{$t < 1$}
                        \State Predict next point: $\eig_\mathrm{P} = \eig(t) + h \dot{\eig}(t)$, $\rvec_\mathrm{P} = \rvec(t) + h \dot{\rvec}(t)$ as in~\eqref{eq:prediction}
                        \State Correct with  Newton's method: $\eig_\mathrm{C} = \eig_\mathrm{P} + \Delta \eig$, $\rvec_\mathrm{C} = \rvec_\mathrm{P} + \Delta \rvec$ as in~\eqref{eq:correction}
                        \State Set $t = t + h$
                        \State Update $\eig(t) = \eig_\mathrm{C}$ and $\rvec(t) = \rvec_\mathrm{C}$
                    \EndWhile
                    \State Store $\left(\eig(1), \rvec(1)\right)$ as $\left(\eig^{(j)}, \rvec^{(j)}\right)$
                \EndFor
                \State \Return $\left(\eig^{(j)}, \rvec^{(j)}\right)$ for $j=1,\ldots,\binom{n+k-1}{k}$
            \end{algorithmic}
        \end{algorithm}

            \subsubsection{Predictor-corrector scheme}

                Let $(\heig^\circ, \rvec^\circ)$ be a normalized eigenpair of~\eqref{initialhomo}, i.e., $\|\heig^\circ\|_2 = \norm{\rvec^\circ}_2 = 1$.
                We assume that there exists a smooth path $(\heig(t), \rvec(t))$ such that
                $\heig(0) = \heig^\circ$, $\rvec(0) = \rvec^\circ$, and
                $H(t,\heig(t), \rvec(t))=\vc{0}$
                for $0\le t \le 1$.
                We track it  to $t = 1$ by the standard
                predictor-corrector scheme, see, e.g., \cite{DongYuYu_SIMAX, P_ContRD2EP}, where
                we
                use Euler's method and Newton's method.

                First, we use Euler's method to predict the next point on the path, which is at $t + h$, with $h$ the stepsize.
                By differentiating $\homo{t} = \vc{0}$, we get the linear system for the tangent vector
                \begin{equation}
                    \label{tangent_homo}
                    J(t, \heig, \rvec) \begin{bmatrix}
                        \dot{\lambda}_0 \\ \vdots \\ \dot{\lambda}_k \\ \dot{\rvec}
                    \end{bmatrix} = \begin{bmatrix}
                        \displaystyle{\sum_{i = 0}^k \lambda_i \left(\gamma \irmat_i - \rmat_i\right) \rvec} \\ 0 \\ 0
                    \end{bmatrix},
                \end{equation}
                where the corresponding Jacobian matrix $J(t, \heig, \rvec)$ is
                \begin{equation}
                    \label{jacF}
                    \begin{bmatrix}
                        \left((1 - t) \gamma\irmat_0 + t \rmat_0\right) \rvec & \cdots & \left((1 - t) \gamma \irmat_k + t \rmat_k\right) \rvec & \displaystyle{\sum_{i = 0}^k \lambda_i \left((1 - t) \gamma \irmat_i + t \rmat_i\right)} \\
                        2 \overline{\lambda}_0 & \cdots & 2 \overline{\lambda}_k & 0 \\
                        0 & \cdots & 0 & 2 \rvec^\herm
                    \end{bmatrix}.
                \end{equation}
                Note that the last two rows of \eqref{homoABC} are not complex differentiable. However, it is known in the context of optimization problems that we can employ the approximation \eqref{jacF};
                for the details, see \cite{LuSu_NLAA}.
                We solve the linear system~\eqref{tangent_homo} and take
                \begin{equation}
                    \label{eq:prediction}
                    \heig_\mathrm{P} = \heig + h \begin{bmatrix} \dot{\lambda}_0 & \cdots & \dot{\lambda}_k \end{bmatrix}^\tr, \quad \rvec_\mathrm{P} = \rvec + h \dot{\rvec},
                \end{equation}
                for the predictor step.

                Next, we use Newton's method with $(\heig_\mathrm{P}, \rvec_\mathrm{P})$ as initial guess to compute the solution of~\eqref{homoABC} at $t + h$.
                In one step of Newton's method, we update $(\heig^{(r)}, \rvec^{(r)})$ to $(\heig^{(r + 1)}, \rvec^{(r + 1)})$ as
                $
                    \heig^{(r + 1)} = \heig^{(r)}+ [\Delta \lambda_0^{(r)}\ \cdots\ \Delta \lambda_k^{(r)} ]^\tr$, $\rvec^{(r + 1)} = \rvec^{(r)} + \Delta \rvec^{(r)},
                $
                where the updates are components of the solution of the linear system
                \begin{equation}
                    \label{NewtonStep}
                    J(t + h, \heig^{(r)}, \rvec^{(r)}) \begin{bmatrix}
                        \Delta \lambda_0^{(r)} \\ \vdots \\ \Delta \lambda_k^{(r)} \\[2pt] \Delta \rvec^{(r)}
                    \end{bmatrix} = - H(t + h, \heig^{(r)}, \rvec^{(r)}).
                \end{equation}
                The different Newton updates lead to a corrected eigenpair:
                \begin{equation}
                    \label{eq:correction}
                    \eig_\mathrm{C} = \eig_\mathrm{P} + \Delta \eig, \quad \rvec_\mathrm{C} = \rvec_\mathrm{P} + \Delta \rvec.
                \end{equation}

            \subsubsection{Singularities along the path}

             The following theorem shows that the singular points of the homotopy method are the points where $\heig$ is a multiple eigenvalue of $\mcl{H}(t, \heig)$.

                \begin{theorem}
                    If $\heig$ is a simple eigenvalue of $\mcl{H}(t, \heig)$ for $t \in [0, 1)$ and $\rvec$ is the corresponding eigenvector, then the Jacobian matrix $J(t, \heig, \rvec)$ is nonsingular.
                \end{theorem}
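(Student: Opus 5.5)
We will show that the kernel of $J(t,\heig,\rvec)$ is trivial. First note that $J$ is square: it has $(n+k-1)+1+1 = n+k+1$ rows and $(k+1)+n$ columns. Suppose
\[
J(t,\heig,\rvec)\begin{bmatrix}\vc{a}\\ \vc{v}\end{bmatrix}=\vc{0},\qquad \vc{a}=(a_0,\ldots,a_k)^\tr\in\CC^{k+1},\ \vc{v}\in\CC^n .
\]
Written out, with $\mt{H}_i=\mt{H}_i(t)$, this means
\[
\sum_{i=0}^k a_i \mt{H}_i\rvec+\mcl{H}(t,\heig)\vc{v}=\vc{0},\qquad \heig^\herm\vc{a}=0,\qquad \rvec^\herm\vc{v}=0 .
\]
The second condition is exactly the middle block row of \eqref{jacF}, since $2\sum_i\overline{\lambda}_i a_i=2\heig^\herm\vc{a}$.

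The plan is to project the first block equation onto the left null space of $\mcl{H}(t,\heig)$. Since $\heig$ is simple, it is in particular geometrically simple, so $\rank \mcl{H}(t,\heig)=n-1$. The matrix $\mcl{H}(t,\heig)$ is $(n+k-1)\times n$, hence $\ker(\mcl{H}(t,\heig)^\herm)$ has dimension exactly $k$. Take a basis $\vc{z}_1,\ldots,\vc{z}_k$ of it and multiply the first block equation by each $\vc{z}_j^\herm$. This eliminates $\vc{v}$ and gives $\mt{M}\vc{a}=\vc{0}$, where $\mt{M}\in\CC^{k\times(k+1)}$ has entries $[\mt{M}]_{ji}=\vc{z}_j^\herm\mt{H}_i\rvec$. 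Moreover,
\[
\mt{M}\heig=\big[\vc{z}_j^\herm\mcl{H}(t,\heig)\rvec\big]_{j=1}^k=\vc{0},
\]
so $\heig\in\ker \mt{M}$.

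The key step, and the expected main obstacle, is to show that $\rank\mt{M}=k$, so that $\ker\mt{M}=\mathrm{span}\{\heig\}$. For this we pass to an affine chart and invoke Corollary~\ref{thm:simpleeig}. Since $\heig\ne\vc{0}$, some coordinate, say $\lambda_p$, is nonzero. Divide the linear pencil by $\lambda_p$, so that the affine parameters are $\mu_i=\lambda_i/\lambda_p$ for $i\ne p$ and $\mt{H}_p$ plays the role of the constant term $\rmat_0$. In this chart, simplicity of $\heig$ as an eigenvalue of $\mcl{H}(t,\cdot)$ should mean that $(\mu_i)_{i\ne p}$ is an algebraically simple eigenvalue of a regular linear rMEP. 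The left null space is unchanged, and $\rvec$ is still the right eigenvector. Corollary~\ref{thm:simpleeig} then shows that the $k\times k$ submatrix of $\mt{M}$ obtained by deleting column $p$ is nonsingular, hence $\rank\mt{M}=k$.

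The delicate points are twofold. First, one must make sure that ``simple'' for the homogeneous pencil at $t\in[0,1)$ transfers to algebraic simplicity in the chosen chart. The minors $f_{j_1,\ldots,j_n}$ are homogeneous, so their Jacobian rank condition is chart-independent up to the Euler relation. Second, one must check that $\mcl{H}(t,\cdot)$ is regular for $t<1$; this holds because $\gamma$ is generic and the initial problem is finitely regular. If needed, one can equivalently take simplicity to be defined through the homogeneous Jacobian and argue directly.

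With $\rank\mt{M}=k$ we get $\vc{a}=c\heig$ for some $c\in\CC$. Then $0=\heig^\herm\vc{a}=c\|\heig\|^2$ forces $c=0$, because $\heig\ne\vc{0}$. The first block equation now reduces to $\mcl{H}(t,\heig)\vc{v}=\vc{0}$. By geometric simplicity, $\vc{v}=c'\rvec$, and $\rvec^\herm\vc{v}=c'\|\rvec\|^2=0$ gives $c'=0$. Hence the kernel of $J(t,\heig,\rvec)$ is trivial, so $J(t,\heig,\rvec)$ is nonsingular.
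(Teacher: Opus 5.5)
Your proof is correct and follows the paper's argument step for step. You project the first block row onto a basis of $\ker(\mcl{H}(t,\heig)^\herm)$, use a homogeneous form of Corollary~\ref{thm:simpleeig} to get a $k\times(k+1)$ matrix of full rank whose kernel is spanned by $\heig$, and then the two normalization rows remove the remaining multiples of $\heig$ and $\rvec$; your affine-chart reduction simply spells out the ``homogeneous version'' that the paper invokes without detail. One small correction: regularity of $\mcl{H}(t,\cdot)$ does not require genericity of $\gamma$, which the statement does not assume anyway. If the normal rank were below $n$, all maximal minors would vanish identically and no eigenvalue could be algebraically simple, so regularity already follows from the hypothesis.
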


                \begin{proof}
                    Take $\bmu = \begin{bmatrix} \mu_0 & \cdots & \mu_k \end{bmatrix}^\tr \in \CC^{k + 1}$ and $\vc{z} \in \CC^n$ such that
                    \begin{equation}
                        \label{eq:Jac_nonsing}
                        J(t, \heig, \rvec) \begin{bmatrix} \bmu \\ \vc{z} \end{bmatrix} = \vc{0}.
                    \end{equation}
                    Then,
                    \begin{equation}
                        \label{prf1}
                        \mcl{H}(t, \heig) \vc{z} + \mu_0 \mt{H}_0(t) \rvec + \cdots + \mu_k \mt{H}_k(t) \rvec = \vc{0}.
                    \end{equation}
                    For linearly independent vectors $\lvec_1, \ldots, \lvec_k$ from $\ker(\mcl{H}(t, \heig)^\herm)$, we get from~\eqref{prf1} that
                    \begin{equation}
                        \label{prf2}
                       \begin{bmatrix}
                           \lvec_1^\herm \mt{H}_0(t) \rvec & \cdots & \lvec_1^\herm \mt{H}_k(t) \rvec \\
                           \vdots & & \vdots \\
                           \lvec_k^\herm \mt{H}_0(t) \rvec & \cdots & \lvec_k^\herm \mt{H}_k(t) \rvec
                       \end{bmatrix} \begin{bmatrix}
                           \mu_0 \\ \vdots \\ \mu_k
                       \end{bmatrix} = \vc{0}.
                    \end{equation}
                    It follows from a homogeneous version of Corollary \ref{thm:simpleeig} that the $k \times (k + 1)$ matrix in~\eqref{prf2} has full rank, so its null space is one-dimensional.
                    Therefore, $\bmu$ and $\heig$ are collinear and $\bmu =\alpha \heig$ for a scalar $\alpha$.
                    But, by considering~\eqref{jacF} and~\eqref{eq:Jac_nonsing}, $2\heig^\herm \bmu = 0$, so $\alpha = 0$ and $\bmu = 0$.
                    From~\eqref{prf1} it now follows that $\vc{z} = \beta \rvec$ for a scalar $\beta$, because $\mcl{H}(t, \heig)$ also has a one-dimensional null space.
                    Similarly, it follows from $2 \rvec^\herm \vc{z} = 0$ that $\beta = 0$.
                    So, since the only solution for~\eqref{eq:Jac_nonsing} is $\bmu = \vc{0}$ and $\vc{z} = \vc{0}$, this
                    means that $J(t, \heig, \rvec)$ is nonsingular.
                \end{proof}

                As there can be only finitely many points $t \in [0,1)$ such that
                 $\mcl{H}(t, \heig)$ has a multiple eigenvalue, we
                can show that for a random $\gamma\in\CC$ the homotopy paths do not
                contain any singular points, for details see Section \ref{sec:homotopy_details}.
                If $\rmep{\heig}$ has a multiple eigenvalue of multiplicity $r$, then $r$ homotopy paths collide at $t = 1$.

                \begin{remark}
                    If $\gamma$ and all matrices of $\rmep{\heig}$ and $\irmep{\heig}$ are real, then for each $t\in[0,1]$
                    the eigenvalues of $\mcl{H}(t, \heig)$ are either real or appear in complex conjugate pairs.
                    If the numbers of real eigenvalues at $t=0$ and $t=1$ do not agree,
                    there must
                    exist transition points $t\in(0,1)$, where two real eigenvalue curves transform into a complex conjugate pair or vice versa.
                    At each such bifurcation point we have a multiple (generically double) eigenvalue $\heig$ and the Jacobian $J(t, \heig, \rvec)$ is singular.

                    Thus, even if the original rMEP has real matrices $\rmat_0, \ldots, \rmat_k$, it is vital to select a complex random $\gamma$ and complex random entries of the matrix $\irmat_0$ to avoid these difficulties. By doing this, we have to follow the paths
                    using more expensive complex arithmetic, but the advantage is that generically all eigenvalues of $\mcl{H}(t, \heig)$ for $t\in[0,1)$ are complex, simple, and
                    do not appear in complex conjugate pairs.

                    We remark that for a real rMEP $\rmep{\heig}$ it should be possible to use only real arithmetic by
                    applying a different tracking strategy, where we use arc length as a parameter and detect bifurcation points to switch between complex and real curves,
                    in a similar way as it has been done in \cite{LiZengCong_1992} for a nonsymmetric eigenvalue problem and in \cite{BP_ContWE2EP} for a weakly elliptic two-parameter sMEP.
                \end{remark}

            \subsubsection{Step size and path jumping}

                As we follow the paths numerically, jumps between paths can occur, e.g., if the stepsize $h$ is too large or if the Jacobian matrix is close to singular. A possible solution is to retrace all paths that lead to a cluster of $r>1$ eigenvalues such that
                the corresponding eigenvectors are not linearly independent, using a smaller stepsize $h$
                and other settings that make the computation more accurate. Since
                finer steps require more time, it makes sense
                to use them only for the paths that have to be retraced.

    \section{Homotopy method for polynomial problems}
        \label{sec:polynomial}

        Compared to a linear rMEP with the same number of parameters $k$ and columns $n$, a generic polynomial rMEP of degree $d$ has $d^k$ times as many eigenvalues.
        Even if the number of eigenvalues of the linear rMEP is such that the problem is solvable with the existing methods in~\cite{HKP_NLAA_2023_RectMEP} and \cite{VermeerschDeMoorLAA}, the factor $d^k$ in~\eqref{number_of_solutions} makes the problem quickly too large for increasing $d$.
        In the polynomial setting, the need for a homotopy method is even more pronounced.
        We start by looking at the generic case (\cref{sec:genericpolynomial}), but also mention special non-generic polynomial structures (\cref{sec:nongenericpolynomial}).
        For both cases, the previously developed linear homotopy plays a crucial role.

        \subsection{Standard generic structures}
            \label{sec:genericpolynomial}

            Suppose that for $n$ and $k$ we can construct an initial linear rMEP
            \begin{equation}
                \label{initialP}
                \irmep{\vc{\mu}} \rvec = \left(\irmat_0 + \mu_1 \irmat_1 + \cdots + \mu_k \irmat_k
                \right) \rvec = \vc{0},
            \end{equation}
            e.g., with \cref{alg:initialproblem}.
            Applying the substitution $\mu_i =  \lambda_i^d$ for $i = 1, \ldots, k$ we then obtain an initial polynomial rMEP of degree $d$ of the form
            \begin{equation}
                \label{initialPK}
                 \irmep{\eig} \rvec = \left(\irmat_0 + \lambda_1^d \irmat_1 + \cdots + \lambda_k^d \irmat_k\right) \rvec = \vc{0}.
            \end{equation}
            Clearly, if $(\mu_1, \ldots, \mu_k)$ is an eigenvalue of~\eqref{initialP}, then $(\lambda_1, \ldots, \lambda_k) = (\mu_1^{1/d}, \ldots, \mu_k^{1/d})$ is an eigenvalue of~\eqref{initialPK}.
            Since we get $d$ different $d$th roots for each component, from each~eigenvalue of \eqref{initialP} we get $d^k$ different eigenvalues of~\eqref{initialPK} that
            share the same eigenvector $\rvec$.
            The number of solutions then agrees with the generic number in~\eqref{number_of_solutions}.

            Consider the $d$th degree version of~\eqref{rectmep}.
            Its homogeneous form is given by
            \begin{equation}
                \label{rectmep_homo}
                \rmep{\heig} \rvec \coloneq \bigg(\sum_{|\bomega| = d} {\heig}^{\bomega} \rmat_{\bomega} \bigg) \rvec = \vc{0},
            \end{equation}
            where $\heig = (\lambda_0, \lambda_1, \ldots, \lambda_k)$ and the multi-index is now $\bomega = (\omega_0, \omega_1, \ldots, \omega_k)$.
            The sum considers all multi-indices $\bomega$ with degree $|\bomega|=\omega_0 + \omega_1 + \cdots + \omega_k$ equal to $d$.
            Similarly, the homogeneous version of~\eqref{initialPK} is equal to
            \begin{equation}
                \label{init_PK}
                \irmep{\heig} \rvec \coloneq \bigg(\sum_{i=0}^k \lambda_i^d \irmat_i \bigg) \rvec = \vc{0}.
            \end{equation}
            Notice that the initial rMEP \eqref{init_PK} only has univariate monomials.
            Now, same as in the linear case, we define $\mcl{H}(t, \heig) = (1 - t) \gamma \irmep{\heig} + t \rmep{\heig}$ and use the homotopy
            \begin{equation}
                \label{homoABC_Poly}
                \homo{t} = \begin{bmatrix}
                    \mcl{H}(t, \heig) \rvec \\
                    \heig^\herm \heig - 1 \\
                    \rvec^\herm \rvec - 1
                \end{bmatrix}.
            \end{equation}
            The corresponding Jacobian matrix is
            \begin{equation}
                \label{jacF_poly}
                J(t, \heig, \rvec) = \begin{bmatrix}
                    \displaystyle{\frac{\partial \mcl{H}}{\partial \lambda_0}(t, \heig) \rvec} & \cdots & \displaystyle{\frac{\partial \mcl{H}}{\partial \lambda_k}(t, \heig) \rvec} & \mcl{H}(t, \heig) \\[2ex]
                    2 \overline{\lambda}_0 & \cdots & 2 \overline{\lambda}_k & 0 \\
                    0 & \cdots & 0 & 2 \rvec^\herm
                \end{bmatrix}
            \end{equation}
            and for the tangent vector we have to solve
            \[
                J(t, \heig, \rvec) \begin{bmatrix}
                    \dot{\lambda}_0 \\ \vdots \\ \dot{\lambda}_k \\ \dot{\rvec}
                \end{bmatrix} = \begin{bmatrix} \gamma \irmep{\heig} \rvec - \rmep{\heig} \rvec \\ 0 \\ 0 \end{bmatrix}.
            \]
            Again, we use Euler's predictor followed by Newton's method to compute the next point on the path.
            In each step, we solve the same linear system as in~\eqref{NewtonStep}, but now with~\eqref{jacF_poly} and~\eqref{homoABC_Poly}.

            So, as long as we can construct and solve the initial linear rMEP, we can apply the homotopy method
            to solve the polynomial rMEP.
            In many situations, when $\binom{n+k-1}{k}$ is manageable but $d^k \binom{n+k-1}{k}$ is too large, \eqref{initialPK} provides a numerical approach that can compute all the solutions for large degrees.
            This adapted homotopy method is, to the best of our knowledge, the only numerical method capable of doing this at the moment.
            The path following part may be slow, as there can be many paths to follow, but the memory requirements are not restrictive. Furthermore, the paths can be followed independently in parallel, which is so embarrassingly simple to implement that using  $M$ workers  we can expect a speedup close to a factor of $M$.

        \subsection{Special non-generic structures}
            \label{sec:nongenericpolynomial}

            A polynomial rMEP of degree $d$
            can have fewer than~\eqref{number_of_solutions} affine eigenvalues when the structure is non-generic and not all $d$th degree monomials are present.
            For example, while a generic $3$-parameter rMEP
            of degree $2$ has $8 \binom{n+2}{3}$ eigenvalues, an rMEP 
            \begin{equation}
                \label{spec_case3}
                \left(\rmat_{000} + \lambda_1 \rmat_{100} + \lambda_2 \rmat_{010} + \lambda_3 \rmat_{001} + \lambda_1^2 \rmat_{200}\right) \rvec = \vc{0}
            \end{equation}
            with generic coefficient matrices has only $2 \binom{n+2}{3}$ solutions.
            We can exploit the structure and reduce the number of paths to follow.
            For the particular case~\eqref{spec_case3}, we could use an initial rMEP
            of the form
            \begin{equation}
                \label{init_spec_case3}
                \left( \irmat_0 + \lambda_1^2 \irmat_1 + \lambda_2 \irmat_2 + \lambda_3 \irmat_3\right) \rvec = \vc{0},
            \end{equation}
            which has the same number of solutions as~\eqref{spec_case3}.
            We construct~\eqref{init_spec_case3} from a linear rMEP~\eqref{initialP} in a similar way as in the generic case
            by using the substitutions $\mu_1 = \lambda_1^2$, $\mu_2 = \lambda_2$, and $\mu_3 = \lambda_3$.

    \section{Numerical examples}
        \label{sec:numericalexamples}

        All numerical experiments were carried out in \textsc{Matlab} 2025b on a desktop computer with an i7–11700K 3.6 GHz CPU and 64 GB RAM.
        Eight workers were used to track the paths in parallel.
        In the recursive construction of the initial rMEP, methods from MultiParEig \cite{multipareig_29} and MacaulayLab \cite{macaulaylab} are used to solve the smaller rMEPs when this is estimated to be more efficient than the homotopy method.
        The \textsc{Matlab} implementation of the homotopy
        method, together with all numerical examples, is available in the public repository \url{https://github.com/borplestenjak/RMEP_homotopy}.

        \subsection{Zero-dimensional solution sets}

            \begin{example}
                A linear two-parameter rMEP
                \begin{equation}\label{eq:intro_example}
                    \left(\begin{bmatrix}
                        -1 & \phantom{-}1 \\
                        \phantom{-}2 & -2 \\
                        \phantom{-}3 & -1
                    \end{bmatrix} + \lambda_1 \begin{bmatrix}
                        \phantom{-}1 & \phantom{-}2 \\
                        -2 & \phantom{-}3 \\
                        \phantom{-}3 & \phantom{-}3 \\
                    \end{bmatrix} + \lambda_2 \begin{bmatrix}
                        \phantom{-}1 & \phantom{-}1 \\
                        \phantom{-}2 & -2 \\
                        -3 & \phantom{-}4
                    \end{bmatrix}\right) \rvec = \vc{0}
                \end{equation}
                has three simple eigenvalues
                $\eig^{(1)}=(2.29390,   1.62430)$,
                $\eig^{(2)}=(0.18703,  -0.61192)$, and
                $\eig^{(3)}=(-0.44292,   0.70259)$.

    The homotopy method starts from a problem of the form~\eqref{eq:twopolynomialmatrix} with generically three simple eigenvalues, follows three paths, and finds all eigenvalues.
    For example, if we use
    \begin{equation}\label{eq:intro_B0A}
       \mcl{B}_1(\eig)=\begin{bmatrix}
            2+i & 0 \\
            3-i & 1+i \\
            0 & 2-i
        \end{bmatrix}
        + \lambda_1 \begin{bmatrix}
            1 & 0 \\
            0 & 1 \\
            0 & 0
        \end{bmatrix}
        + \lambda_2 \begin{bmatrix}
            0 & 0 \\
            1 & 0 \\
            0 & 1
        \end{bmatrix}
    \end{equation}
    and take $\gamma=1$,
                then the paths that start in the eigenvalues
                $\mu^{(1)}=(-2-i, -3 +i)$,
                $\mu^{(2)}=(-1-i,   -2+i)$, and
                $\mu^{(3)}=(-2-i,  -2+i)$
                of  \eqref{eq:intro_B0A}
                lead, respectively, to $\eig^{(1)}$, $\eig^{(2)}$, and $\eig^{(3)}$.
            We provide a graphical presentation of the homotopy paths (only the affine part of the eigenvalues) in the left-hand side of Fig.~\ref{fig:Example1_Intro}.
            The paths are
            clearly smooth and suitable for tracing by Algorithm~\ref{alg:homotopy}.

            Although the eigenvalues of \eqref{eq:intro_example} are real, it is important to use a complex initial problem.
            For example, if we instead use a real initial rMEP,
                \begin{equation}\label{eq:intro_B0B}
                    \mcl{B}_2(\eig)=\begin{bmatrix}
                        \phantom{-}2 & 0 \\
                        -1 & 1 \\
                        \phantom{-}0 & 2
                    \end{bmatrix}
                    + \lambda_1 \begin{bmatrix}
                        1 & 0 \\
                        0 & 1 \\
                        0 & 0
                    \end{bmatrix}
                    + \lambda_2 \begin{bmatrix}
                        0 & 0 \\
                        1 & 0 \\
                        0 & 1
                    \end{bmatrix},
                \end{equation}
                then from the paths starting in the eigenvalues
                $\mu^{(1)}=(-2, 1)$,
                $\mu^{(2)}=(-1,   -2)$, and
                $\mu^{(3)}=(-2,  -2)$
                of  \eqref{eq:intro_B0B}
                one leads to the eigenvalue $\eig^{(1)}$, while
                the other two fail to pass
                a bifurcation point  $(-1.09413,  -0.12213)$ at $t=0.49386$, where
                a real eigenvalue pair transforms into a complex conjugate pair.
                The right-hand side of Fig.~\ref{fig:Example1_Intro} visualizes this behavior.


 \begin{figure}[h]
    \includegraphics[width=7.2cm]{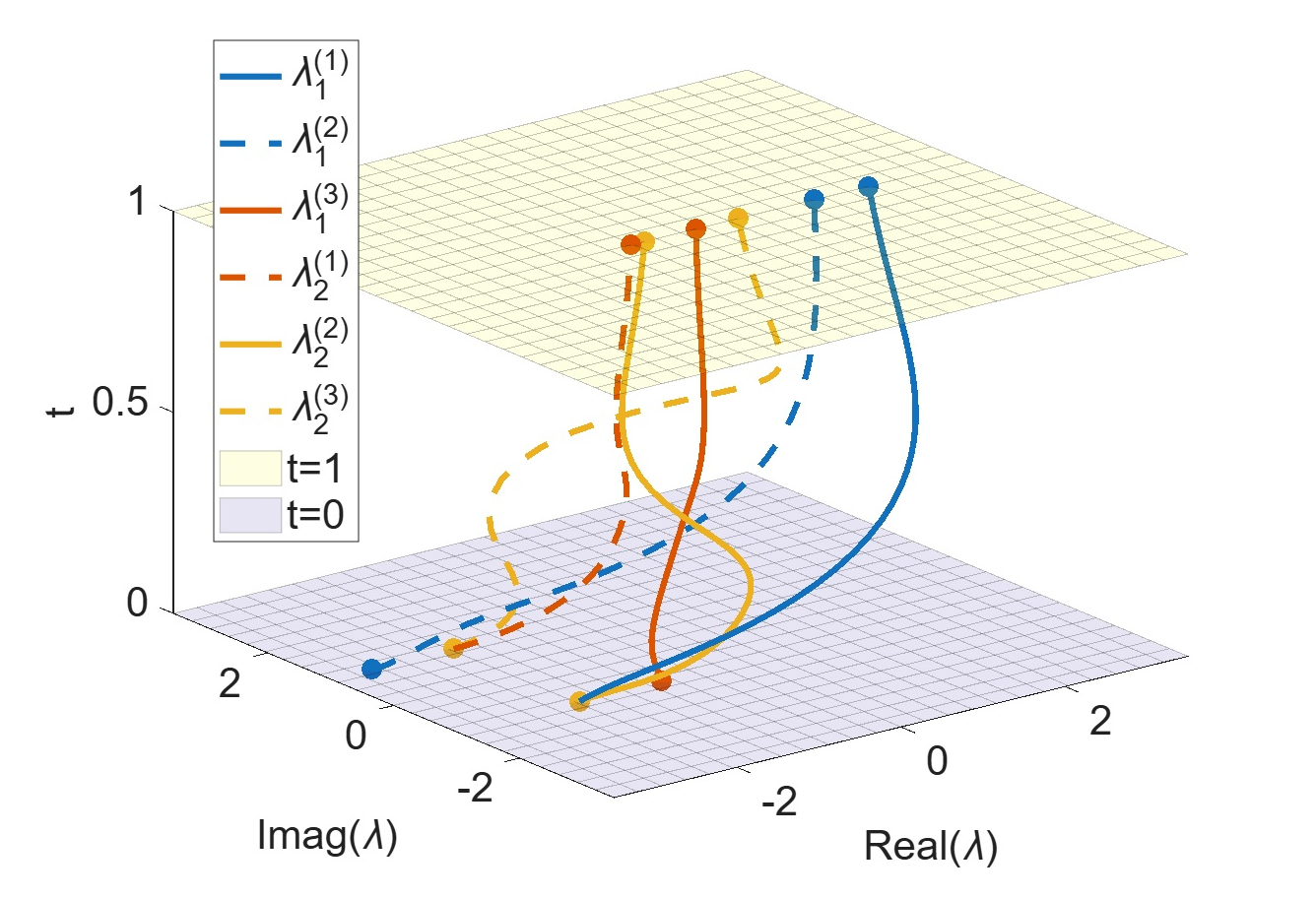}\hspace{-0.5cm}
    \includegraphics[width=6.2cm]{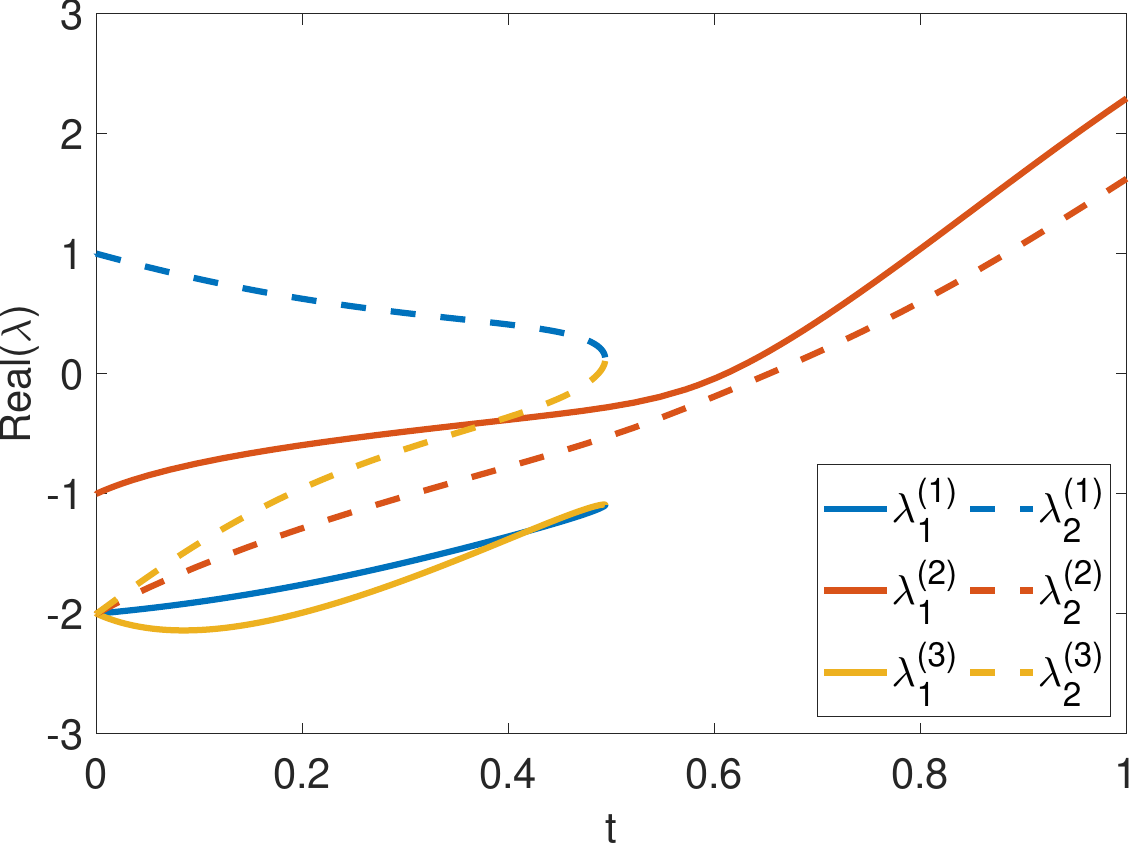}
    \caption{Homotopy paths (affine eigenvalue part only) for the rMEP \eqref{eq:intro_example}. Left: if we start with the complex initial problem \eqref{eq:intro_B0A} then complex curves lead to real eigenvalues of \eqref{eq:intro_example}. Right: if we start
    from the real initial problem \eqref{eq:intro_B0B} then paths are real and some get stuck at a
    bifurcation point.
    \label{fig:Example1_Intro}}
\end{figure}
\end{example}

            \begin{example}
                \label{ex:linear}
                We test the homotopy method on random linear rMEPs with matrices whose
                entries are independent real standard normal variables. Such generic problems are regular
                and have finite simple eigenvalues.

                Based on extensive numerical experiments, we selected
                the entries $\elem{j}{i}$, $i = 1, \ldots, n$, $j = 0, \ldots, k - 1$,
                of the banded matrix $\irmat_0$ of the initial rMEP \eqref{initialP}
                in a way that the entries on each of the $k$ diagonals form
                a randomly ordered and randomly rotated set of $n$-th roots of unity.
                In our tests, this choice consistently produced among
                the best-conditioned Jacobian matrices
                $J(0, \heig^\circ, \rvec^\circ)$ over all
                 normalized eigenpairs $(\heig^\circ, \rvec^\circ)$
                of~\eqref{initialhomo}. We provide an explanation for this choice in Section \ref{sec:optchoice}.

                We compared the performance and accuracy of the homotopy method
                to \verb|rect_multipareig| from \cite{multipareig_29} and \verb|macaulaylab| from \cite{macaulaylab}.
                A comparison of computation times to compute all eigenvalues
                is given in Fig.~\ref{fig:Example1_k}.
                The left-hand side shows how the computation times for random rMEPs with
                a fixed number of parameters $k$ 
                depend
                on the number of columns $n$, while the right-hand  side shows how
                the computation times for random rMEPs with a fixed number of columns $n$ 
                depend on the number of parameters $k$.

                Table \ref{tab:ex1} contains the computation time for selected combinations of $n$ and $k$.
                We also provide timing results of the sequential version of Algorithm~\ref{alg:homotopy} and the computation time required for the construction of the initial rMEP. Note
                that for problems with many parameters the initial construction can be more expensive than the path following phase.
                We see that in the problems with many paths, the speedup in the parallel version is close to a factor eight, the number of workers.
                For large values of $k$ and $n$, the
                sequential version is already faster than the other two methods, which both
                eventually fail for rMEPs with
                too many eigenvalues.

 \begin{figure}[h]
    \includegraphics[width=6.4cm]{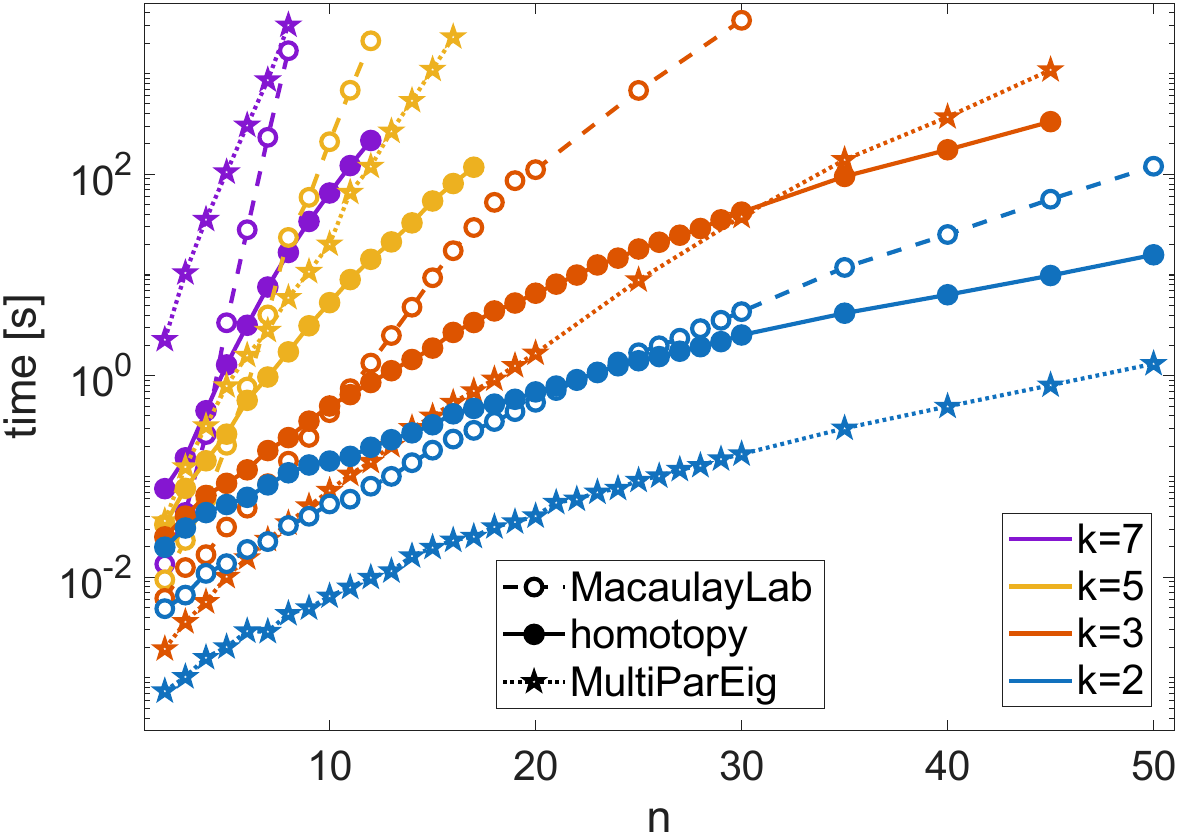}\
    \includegraphics[width=6.4cm]{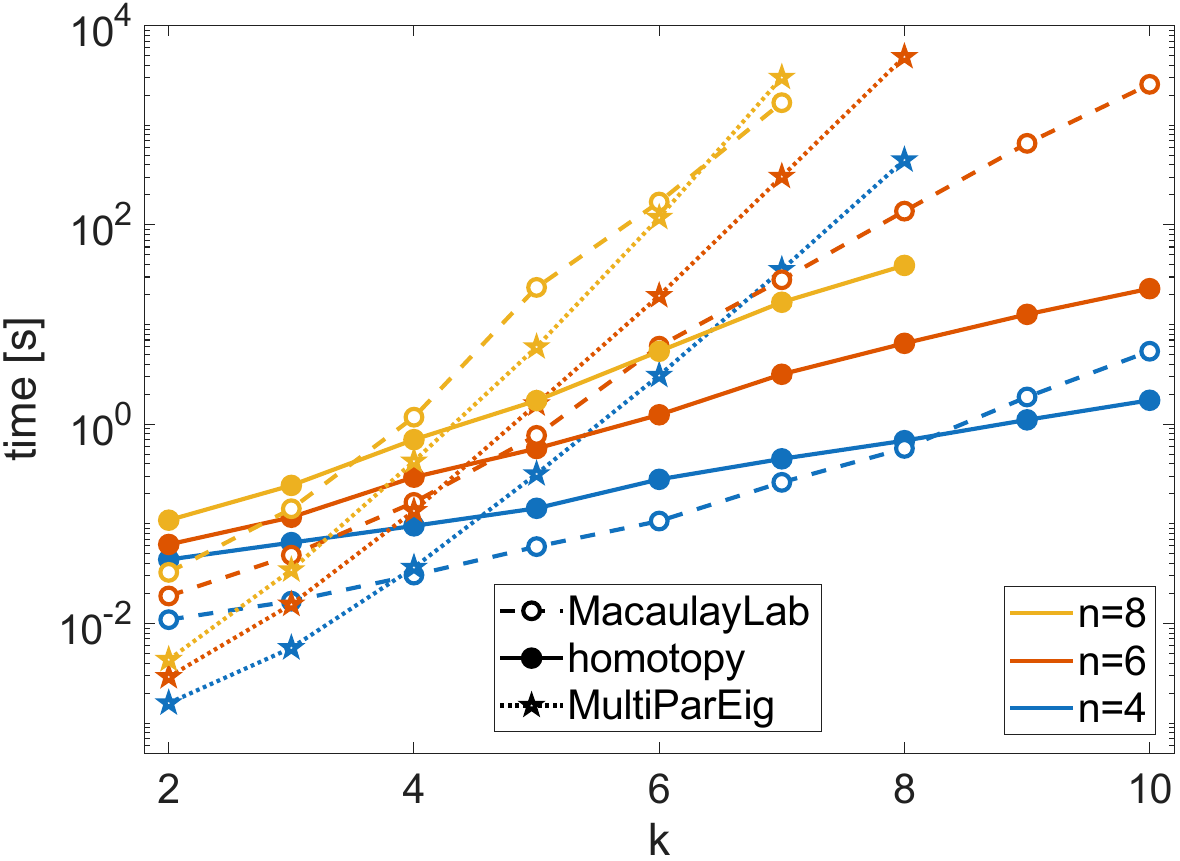}
    \caption{Computation times to solve a random linear rMEP with $k$ parameters and $n$ columns by
    the homotopy method, \texttt{macaulaylab} and \texttt{rect\_multipareig}. Left: dependence on $n$ for fixed $k$, Right: dependence on $k$ for fixed $n$.
    \label{fig:Example1_k}}
\end{figure}

\begin{table}[htb!]
\centering
\caption{Computation times in seconds of
    the homotopy method (sequential and parallel version with 8 workers), \texttt{macaulaylab} and \texttt{rect\_multipareig} to solve
    a random linear rMEP for selected problems with $n$ columns and $k$ parameters.\label{tab:ex1}}
\footnotesize
\begin{tabular}{rrr|r|r|rr|rrr} \hline \rule{0pt}{2.6ex}%
$k$ & $n$ & eigs & MacaulayLab & MultiParEig & Hom.(S) & init & Hom.(P) & init & speedup\\[0.5mm]
\hline \rule{0pt}{2.6ex}%
  2 &  25 & 325&   $1.7$ & $\mathbf{0.09}$ & $7.2$ & $0.02$ & $1.4$ & $0.02$ & $5.1$\\
  2 &  50 & 1275&  $120.1$ & $\mathbf{1.3}$ & $ 104.6$ & $0.09$ & $15.9$  & $0.09$ & $6.6$\\
  3 &  25 & 2925&  $679.6$ & $\mathbf{8.9}$ & $109.9$ & $0.4$ & $18.1$ & $0.4$ & $6.1$ \\
  3 &  40 & 11480&  out of mem. & $370.4$ & $1119.4$ & $2.6$ & $\mathbf{175.1}$ & $2.6$ & $6.4$\\
  4 &  15 & 3060&  $636.9$ & $14.4$ & $63.8$ & $0.9$ & $\mathbf{11.1}$ & $0.9$ & $5.7$\\
  5 &  15 & 11628&  out of mem. & $1097.6$ & $306.9$& $9.7$ & $\mathbf{54.3}$  & $9.7$ & $5.7$ \\
  8 &  6 & 1287&  $137.5$ & $4870.7$ &  $20.8$ & $2.4$ & $\mathbf{5.4}$ & $2.2$ & $3.9$\\
  10 &  6 & 3003&  $2574.2$ & out of mem. & $80.1$ & $9.4$ & $\mathbf{21.3}$ & $8.7$ & $3.8$\\
  10 &  10 & 92378&  out of mem. & out of mem. & 4943.9 & 2703.4 & $\mathbf{1116.2}$ & $718.6$ & $4.4$\\
  20 &  3 & 231&  $13.8$ & out of mem. & $15.0$ & $10.0$ & $\mathbf{5.6}$ & $4.6$ & $2.7$\\
  60 &  2 & 61&  $64.6$ & out of mem. & $36.8$ & $31.4$ & $\mathbf{8.5}$ & $7.5$ & $4.3$\\
\hline
\end{tabular}
\end{table}

 The homotopy method is the fastest approach
 if $n$ or $k$ are large enough. The method from MultiParEig is the most efficient approach for problems with $k=2$ or $k=3$ and a moderate number of columns, while the method using block Macaulay matrices
 is the best choice for problems with a small number of columns and $k\ge 4$ parameters.

 The comparison of the maximal residuals in Fig.~\ref{fig:Example1_err} shows that the homotopy method is the most accurate method,
 which can be explained by the use of Newton's method in the path following step. We remark that we could refine the results obtained by the other two methods in a similar way by adding Newton's method.

 \begin{figure}[h]
    \includegraphics[width=6.3cm]{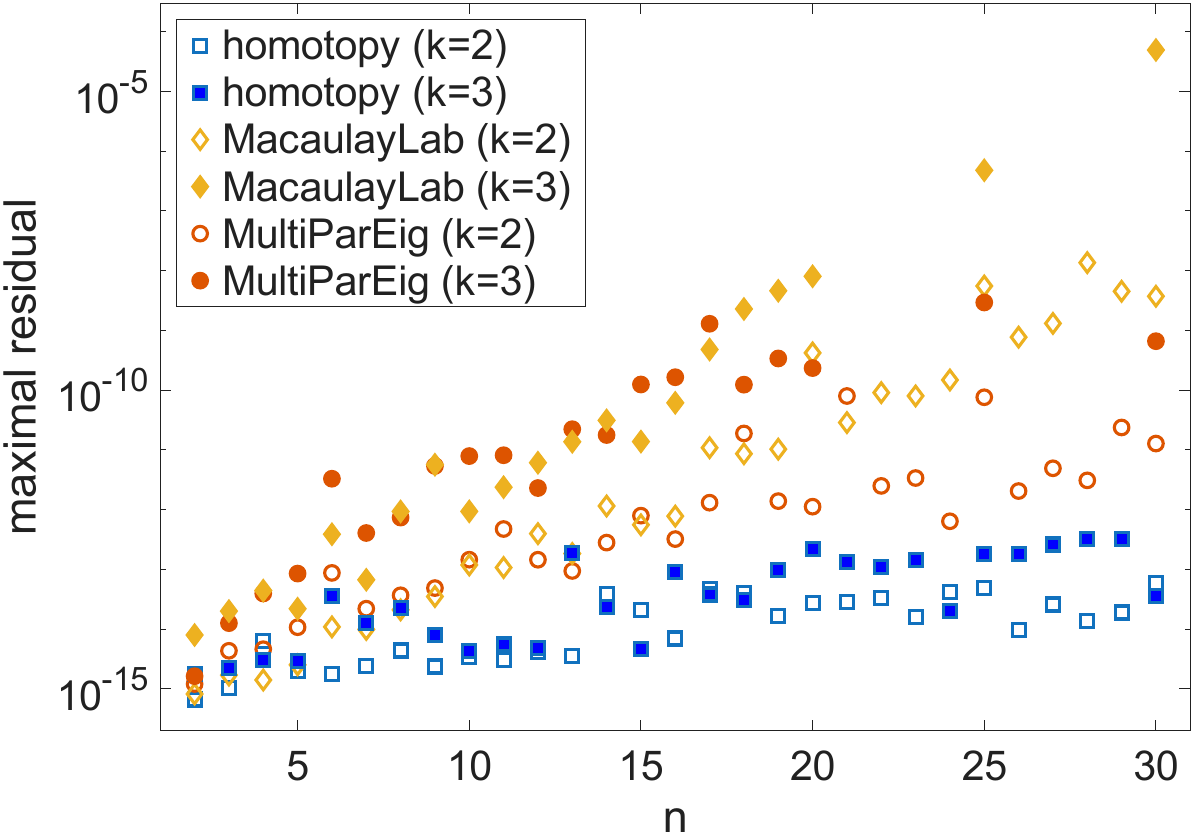}\ \ \
    \includegraphics[width=6.3cm]{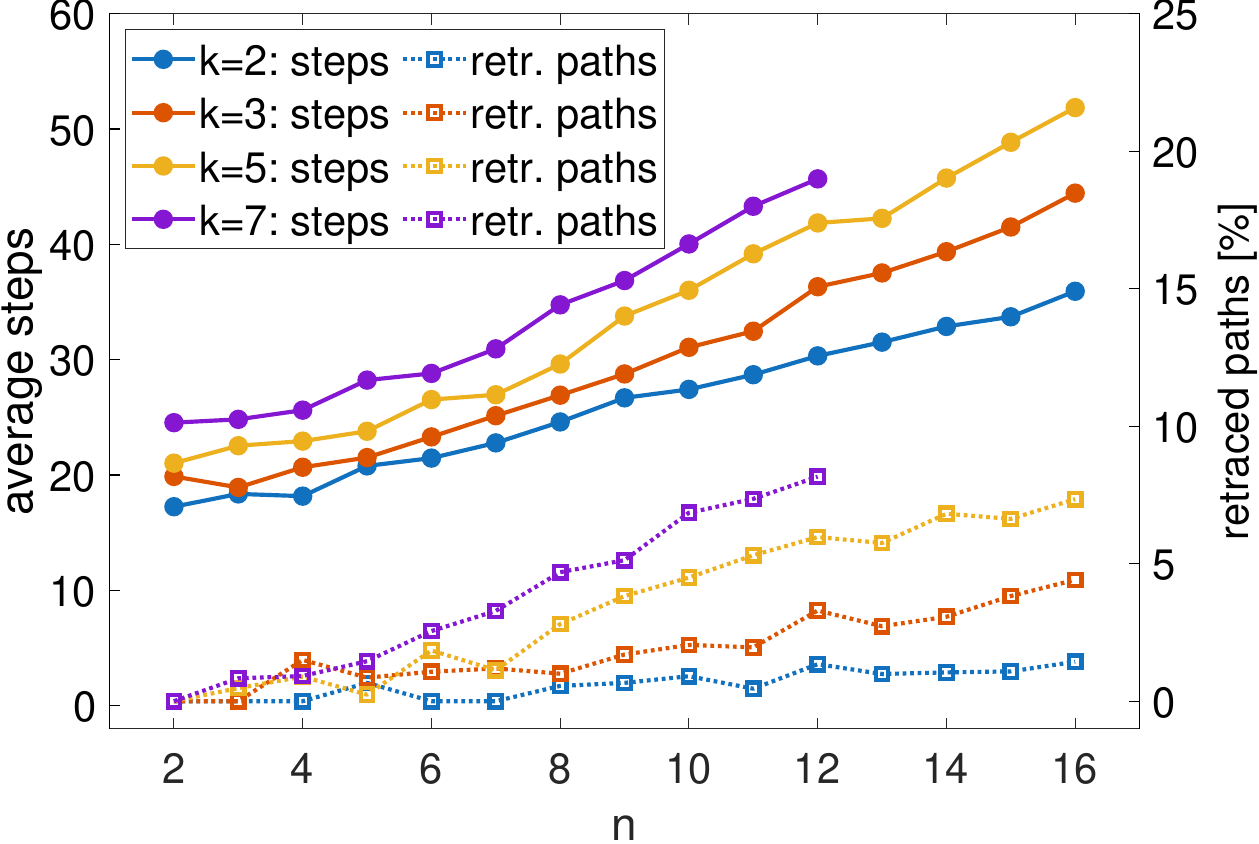}
    \caption{Results for various numbers of spectral parameters. Left: maximal residual for random linear rMEPs with $k=2,3$ parameters for
    the homotopy method, \texttt{macaulaylab} and \texttt{rect\_multipareig}.
    Right: Average number of steps for an eigenvalue path (solid line) and percentage of retraced paths (dotted line) for the homotopy method for linear rMEPs with $k=2,3,5,7$ parameters.\label{fig:Example1_err}}
\end{figure}

 The right-hand side of Fig.~\ref{fig:Example1_err} contains two plots per number of parameters: The solid line presents the average number of steps for following one path in the homotopy method and shows that the number of steps grows linearly with the number of columns $n$, while the dotted line presents the average percentage of paths that were retraced with more accurate settings.
 We see that the percentage of retracing slightly grows with $n$ and $k$.

\end{example}

 \begin{example}
        In a similar way we applied the homotopy method to random polynomial rMEPs with matrices whose entries are real standard normal variables. As in Example~\ref{ex:linear}, such problems are regular
        and have finite simple eigenvalues.
        We compared the computation times
        to methods \verb|macaulaylab| from \cite{macaulaylab} and
        \verb|rect_quad_multipareig| from \cite{multipareig_29} (the latter is
        implemented only for quadratic rMEPs with $2$, $3$, and $4$ parameters).
        We report computation times for random quadratic rMEPs with
        $2$, $3$ and $4$ parameters in the left-hand side of Fig.~\ref{fig:Example2_k}, while
        in the right-hand side we compare the homotopy method to \verb|macaulaylab|
        for random polynomial rMEPs with two parameters ($k=2$) of degree $3$ to $7$.

        In Table \ref{tab:ex2}, we consider selected combinations of the
        degree $d$, number of parameters $k$, and number of columns $n$.
        Again, we notice that the homotopy method is the fastest method for large $n$ or $k$.
        We can see that for the simplest polynomial case, a two-parameter quadratic rMEP, the homotopy method becomes the fastest method for $n \ge 60$.

\begin{figure}[h]
    \includegraphics[width=6.4cm]{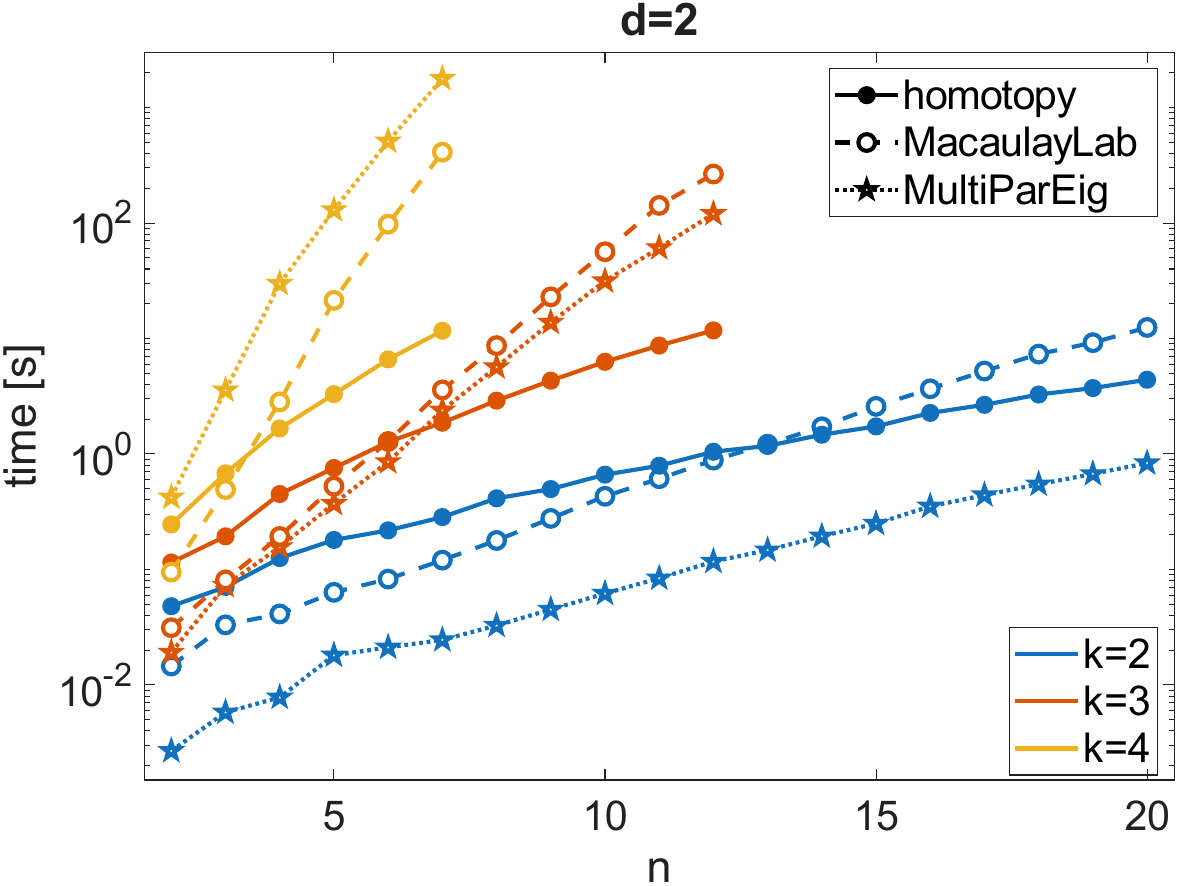}\
    \includegraphics[width=6.4cm]{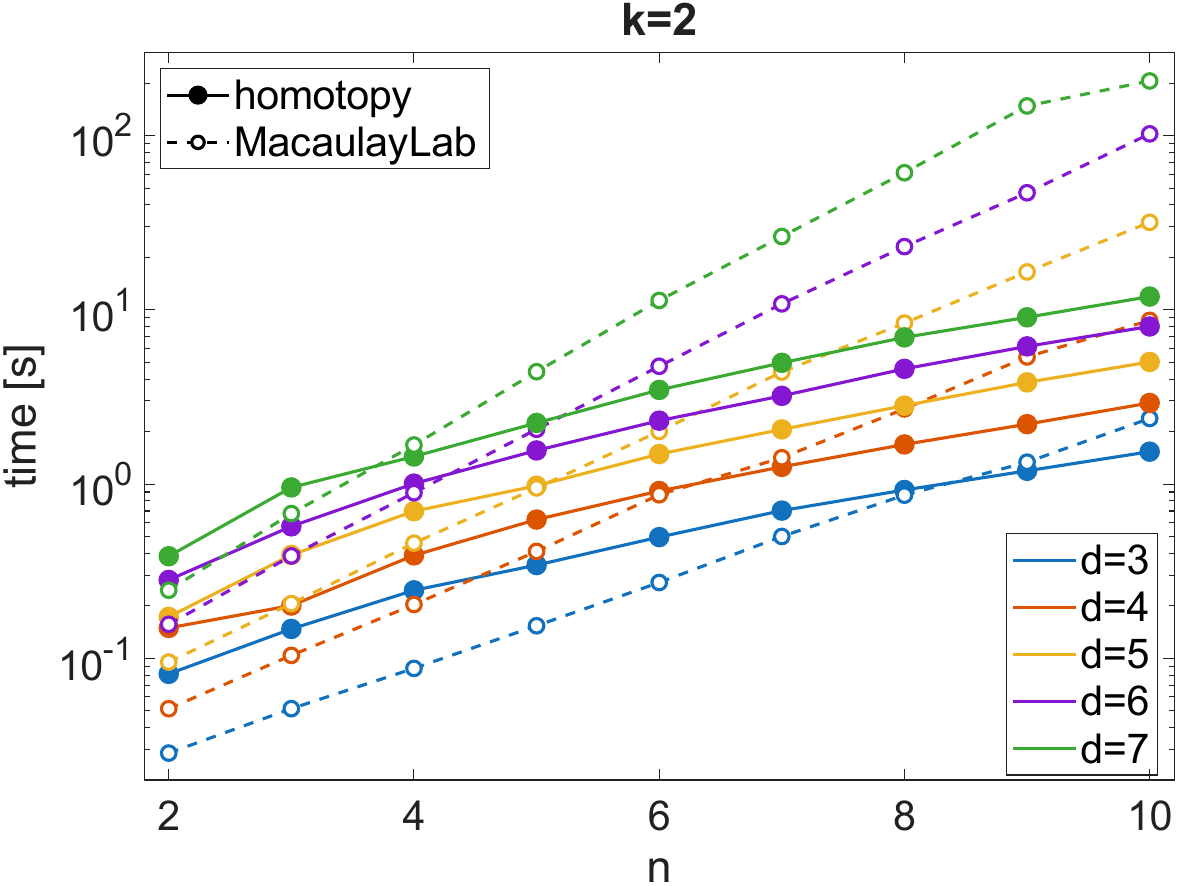}
    \caption{Results for various degrees. Left: Computation times for random quadratic rMEPs with $k=2,3,4$ parameters
    using the homotopy method, \texttt{macaulaylab}, and \texttt{rect\_quad\_multipareig}. Right: Times for random polynomial two-parameter rMEPs of degree $d=3,\ldots,7$ using the homotopy method and \texttt{macaulaylab}.\label{fig:Example2_k}}
\end{figure}

\begin{table}[htb!]
\centering
\caption{Computation times in seconds for
    the homotopy method, \texttt{macaulaylab} and \texttt{rect\_quad\_multipareig} to solve
    a random polynomial rMEP for selected combinations of degree $d$, $n$ columns and $k$ parameters.\label{tab:ex2}}
 \footnotesize
\begin{tabular}{rrrr|r|r|rr} \hline \rule{0pt}{2.6ex}%
$d$ & $k$ & $n$ & eigs & MacaulayLab & MultiParEig & Homotopy & init \\[0.5mm]
\hline \rule{0pt}{2.6ex}%
  2 & 2 &  30 & 1860&  $178.5$ & $7.0$ & $16.4$ & $0.04$ \\
  2 & 2 &  60 & 7320&  out of memory & $284.7$ & $238.4$ & $0.24$ \\
  2 & 3 &  12 & 2912&  $265.8$ & $120.2$ & $11.8$ & $0.06$ \\
  2 & 4 &  7 &  3360&  $410.9$ & $1781.4$ & $11.7$ & $0.07$\\
  3 & 2 &  15 & 1080&  $20.1$ & not available & $4.3$& $0.01$  \\
  4 & 3 &  5  & 2240&  $78.5$ & not available &  $8.1$ & $0.07$ \\
  7 & 2 &  10 & 2695&  $206.0$ & not available & $11.9$ & $0.04$\\
  10 & 2 & 6 & 2100&  $106.6$ & not available & $10.1$ & $0.03$ \\
\hline
\end{tabular}
\end{table}
\end{example}

        \subsection{Positive-dimensional solution sets}

            Although we do not provide a supporting theory, the homotopy method works in principle also for rMEPs with positive-dimensional solution sets.
            As a proof of concept, we give several numerical examples. For such problems, if the Jacobian
            is nonsingular for all $t\in[0,1)$, then some homotopy paths converge to points on a positive-dimensional solution, while the remaining paths converge to the isolated eigenvalues, as illustrated in Fig.~\ref{fig:homotopy}.

            \begin{figure}[h]
                \centering
                \includegraphics[width=10cm]{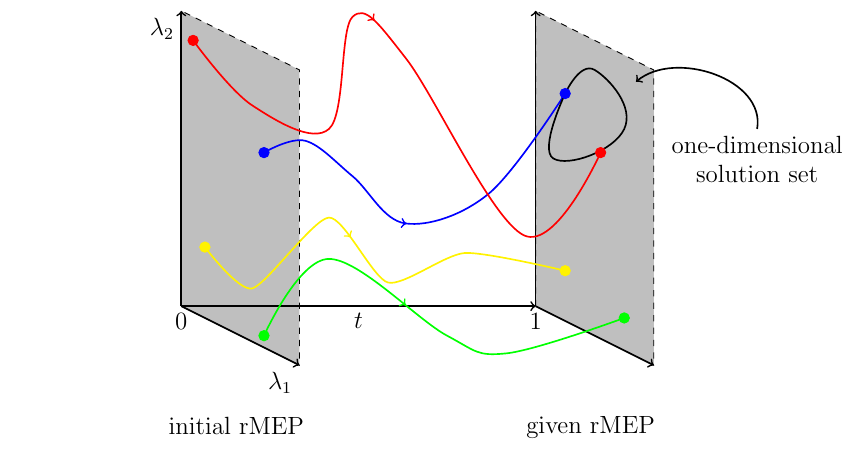}
                \caption{Visual explanation of the homotopy method for a two-parameter eigenvalue problem.
                The homotopy method tracks the solution paths through the continuous deformation from the initial rMEP to the given rMEP, thereby recovering the latter's unknown solutions. In the particular example
                the given rMEP has
                a one-dimensional solution set in addition to
                the simple eigenvalues.}
                \label{fig:homotopy}
            \end{figure}

            The main problem with such rMEPs is that it is very difficult or even impossible to detect the paths that have to
            be retraced due to path jumping.
            We can find paths that converged to the same eigenvalue, but, when a path that should in exact arithmetic converge to an isolated eigenvalue has numerically converged to a point on a positive-dimensional solution, then it appears as a simple eigenvalue and will not be retraced.

            For instance, for many classes of rMEPs with a positive-dimensional solution set at infinity we know
            the expected number of finite eigenvalues.
            If the homotopy method computes fewer finite eigenvalues than expected, then some paths leading to infinite eigenvalues should be retraced.
            But, as we cannot detect individual bad paths, we would have to retrace all paths that converged to infinite eigenvalues. Usually there are so many such eigenvalues that it is not efficient to follow the paths again and we are limited to a single round of path following.

            \begin{example}
                The two-parameter eigenvalue problem
                \begin{equation}
                    \left(\begin{bmatrix}
                        1 & 3 \\
                        2 & 2 \\
                        3 & 1
                    \end{bmatrix} + \lambda_1 \begin{bmatrix}
                        1 & 2 \\
                        2 & 3 \\
                        3 & 1 \\
                    \end{bmatrix} + \lambda_2 \begin{bmatrix}
                        1 & 1 \\
                        2 & 2 \\
                        3 & 0
                    \end{bmatrix}\right) \rvec = \vc{0}
                \end{equation}
                has a one-dimensional solution set
                $\{(\lambda_1,\lambda_2): \lambda_1+\lambda_2=-1\}$ and one eigenvalue
                $\eig^{(1)}=(-4,4)$.
                The homotopy method starts from a generic problem of the form~\eqref{eq:twopolynomialmatrix} that has three simple eigenvalues, follows three paths, and finds the
                eigenvalue $\eig^{(1)}$ and two points on the one-dimensional set.
                Different initial problems lead to different points on the one-dimensional set.
                For example, if we
                start from the initial problem
                \eqref{eq:twopolynomialmatrix}, where we take $\gamma=1$ and use for
                $B_0$ respectively
                \[
                    B_{0A}=\begin{bmatrix}
                        2+i & 0 \\
                        3-i & 1+i \\
                        0 & 2-i
                    \end{bmatrix}\quad\textrm{and}\quad
                    B_{0B}=\begin{bmatrix}
                        3+i & 0 \\
                        4+i & 5-i \\
                        0 & 6-2i
                    \end{bmatrix},
                \]
                we get solutions
                \[\begin{array}{rclrclr}
                \eig^{(1A)}&=&(-4,4),& \kappa_{1A}&=&1.06\cdot 10^1,\\
                \eig^{(2A)}&=&(\phantom{-}1.9505 - 0.6963i, -2.9505 + 0.6963i),& \kappa_{2A}&=&2.68\cdot 10^{16},\\
                \eig^{(3A)}&=&(-2.2840 - 0.3032i,   \phantom{-}1.2840 + 0.3032i),& \kappa_{3A}&=&9.61\cdot 10^{15},
               \end{array}
                \]
                and
                \[\begin{array}{rclrcl}
                \eig^{(1B)}&=&(-4,4),& \kappa_{1B}&=&1.06\cdot 10^1,\\
                \eig^{(2B)}&=&  (-2.6762 - 0.2693i,  \phantom{-}1.6762 + 0.2693i),&\kappa_{2B}&=&2.06\cdot 10^{14},\\
                \eig^{(3B)}&=&  (\phantom{-}2.6761 + 0.6026i,  -3.6761 - 0.6026i),&\kappa_{3B}&=&1.81\cdot 10^{16}.
               \end{array}
                \]
             To identify which of the computed eigenvalues are isolated simple eigenvalues,
             we can apply Corollary \ref{thm:simpleeig} and check the
             nonsingularity of
             the matrix            \eqref{genB0_conditionB}.
             The condition numbers
             $\kappa_{1A},\ldots,\kappa_{3B}$ of the
             corresponding matrix \eqref{genB0_conditionB} clearly show
             that in both cases the homotopy method finds the
             eigenvalue $\eig^{(1)}$.
            \end{example}

            The next two examples come from systems and control applications, which motivates the development of
            homotopy continuation methods, as these rMEPs grow quickly with the number of measured data points or model orders.

            \begin{example}
                 Autoregressive moving-average (ARMA) models represent an observed signal $\vc{y} \in \Rset^N$ as the output of a linear filter driven by an unobserved, latent input signal $\vc{e} \in \Rset^M$.
                 An ARMA$(p,1)$ model is a linear time-invariant model that combines a regression of the observed output $y[k]$ on its previous $p$ values with a linear combination of the latent inputs $e[k]$ and $e[k - 1]$,
                 \begin{equation}\label{eq:arma}
                     y[k] + \sum_{i=1}^p \alpha_i y[k - i] = e[k] + \gamma e[k - 1],\quad k=p+1,\ldots,N,
                 \end{equation}
                 where the weighting coefficients $\alpha_1,\ldots,\alpha_p$ and $\gamma$ are the unknown model parameters.
                 For a given $\vc{y} \in \Rset^N$,
                 we seek the model parameters $(\alpha_1,\ldots,\alpha_p, \gamma)$ that minimize
                 \begin{equation}
                     \begin{gathered}
                         \min_{\alpha_1,\ldots,\alpha_p, \gamma, \vc{e}} \frac{1}{2} \norm{\vc{e}}_2^2,\quad                          \text{subject to } \eqref{eq:arma}.
                     \end{gathered}
                 \end{equation}
                 Vermeersch and De Moor~\cite{vermeersch2019globally} showed that the globally optimal solution to the ARMA$(p,1)$ identification problem is given by an eigenvalue of a quadratic
                 $(p+1)$-parameter rMEP. The corresponding rMEPs for
                 ARMA(1,1) and ARMA(2,1) are, respectively,
                 \[
                     \left(\rmat_{00} + \alpha_1 \rmat_{10} + \gamma \rmat_{01} + \gamma^2 \rmat_{02}\right) \rvec = \vc{0}
                 \]
                 with structured $(3N - 1) \times (3N - 2)$ coefficient matrices, and
                 \[
                     \left(\rmat_{000} + \alpha_1 \rmat_{100} + \alpha_2 \rmat_{010} + \gamma \rmat_{001} + \gamma^2 \rmat_{002}\right) \rvec = \vc{0}
                 \]
                 with $(4N - 5) \times (4N - 7)$ coefficient matrices, for the
                 exact construction of which see~\cite{vermeersch2019globally}.

                The first numerical method to find
                the optimal solution was based on block Macaulay matrices \cite{VermeerschDeMoorLAA}, while a more efficient approach, which exploits the structure of the support to compress the rMEP before solving~\cite{HKP_NLAA_2023_RectMEP}, is provided as functions
                {\tt arma11} and {\tt arma21} in MultiParEig \cite{multipareig_29}.

                 For ARMA(1,1) and ARMA(2,1) we
                 compared
                 the homotopy method to {\tt macaulaylab} from \cite{macaulaylab} and
                \verb|arma11| and \verb|arma21| from \cite{multipareig_29} on random data sets, see Fig.~\ref{fig:arma}.
                 For ARMA(1,1), the method \verb|arma11| was the fastest for all tested data lengths, but due to huge memory requirements, it would eventually fail for $N$ too large. Although being too slow for real-world problems, the homotopy method can be applied to much larger samples. For ARMA(2,1), the homotopy becomes faster than \verb|arma21| at $N=15$, which is also the largest size that \verb|arma21| could handle without running out of memory.

                \begin{figure}[h]
                    \centering
                    \includegraphics[width=6.4cm]{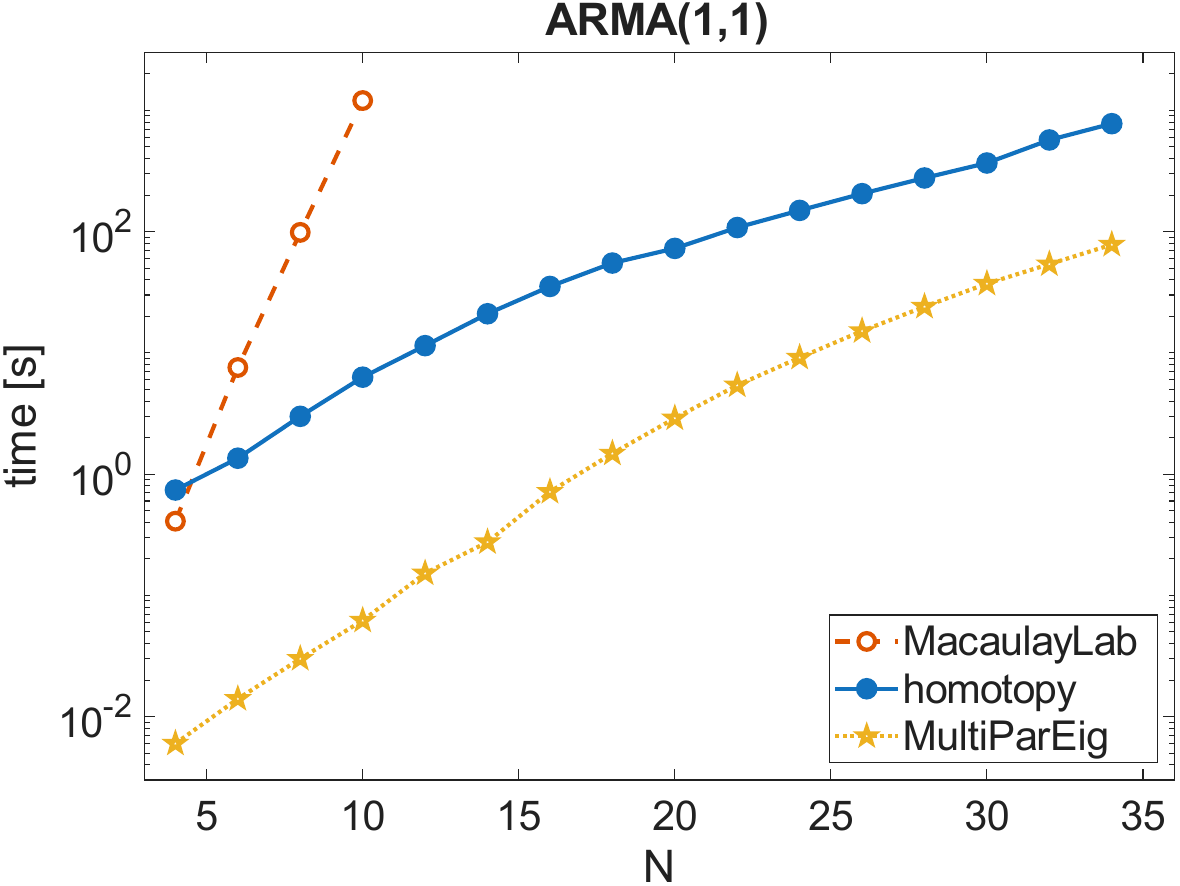}\ \ \
                    \includegraphics[width=6.4cm]{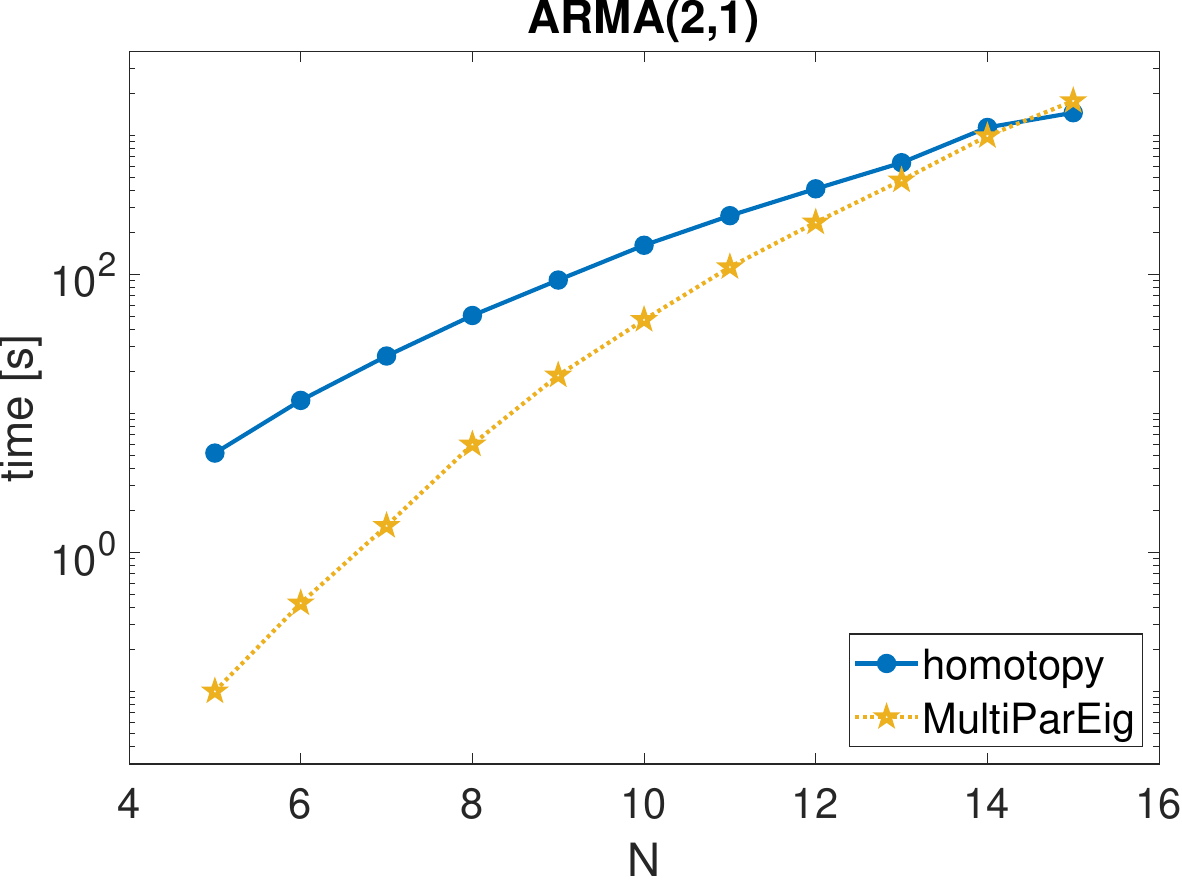}
                    \caption{Timings for finding the optimal solution of ARMA(1,1) and ARMA(2,1) problems
                    using the homotopy method, \texttt{macaulaylab}, \texttt{arma11}, and
                    \texttt{arma21}.\label{fig:arma}}
                \end{figure}
                            \end{example}

            As observed in \cite{HKP_NLAA_2023_RectMEP}, the rMEPs related to ARMA models
            have just a small portion of finite eigenvalues. In the homotopy most paths converge  to points on a
            positive-dimensional set at infinity. If one could construct a homotopy where we
            could follow only paths leading to finite eigenvalues, this would present a major speedup.

            \begin{example}
                Finally, we consider the model order reduction problem for the class of minimal, stable, single-input single-output, linear time-invariant continuous-time systems.
                Given a strictly proper $n$th-order transfer function,
                \begin{equation}
                    H(s) = \frac{b(s)}{a(s)} = \frac{b_{n-1} s^{n-1} + \cdots + b_1 s + b_0}{s^n + a_{n-1} s^{n-1} + \cdots + a_1 s + a_0},
                \end{equation}
                the model order reduction problem consists in searching for a stable reduced model
                $\widehat{H}(s)$
                of order
                $m \ll n$ that minimizes
                \[
                    \label{eq:approximationerror}
                    \norm{H(s) - \widehat{H}(s)}_{\mathcal{H}_2}^2 = \frac{1}{2 \pi} \int_{-\infty}^{+\infty} \left\vert H(j\omega) - \widehat{H}(j\omega)\right\vert^2 \mathrm{d}\omega.
                \]
                Several techniques exist for obtaining such $\widehat{H}(s)$, and recently Lagauw et al.~\cite{lagauw2023globally} showed that a quadratic $m$-parameter rMEP with $(m + n) \times (n + 1)$ coefficient matrices yields the globally optimal model parameters as one of its eigenvalues.

                We run the homotopy method on the $m$-parameter rMEPs arising from the model order reduction problem for $m=2,3,4$ and various $n$, and as before, compare with \texttt{rect\_quad\_multipareig} and {\tt macaulaylab}. Computation times for selected combinations of $m$ and $n$ together with the number of paths for the homotopy and the sizes of the $\Delta$ matrices related to \texttt{rect\_quad\_multipareig} are presented in
                Table \ref{tab:ex5}. For all three values of $m$ homotopy is
                the fastest method for sufficiently large $n$.
                As in the ARMA problems just a small portion
                of paths lead to finite eigenvalues.

\begin{table}[htb!]
\centering
\caption{Computation times in seconds of
    the homotopy method, MacaulayLab, and MultiParEig to solve a quadratic rMEP related to optimal $\mathcal{H}_2$ model reduction
    of a SISO LTI system from order $n$ to order $m$.\label{tab:ex5}}
 \footnotesize
\begin{tabular}{rr|r|rr|rr} \hline \rule{0pt}{2.6ex}%
$m$ & $n$ & MacaulayLab & MultiParEig & size of $\Delta$ & Homotopy & number of paths\\[0.5mm]
\hline \rule{0pt}{2.6ex}%
  2 & 10 & $0.60$ & $\mathbf{0.09}$ & 396 & $2.04$ & $264$ \\
  2 & 30 & $1329.8$ & $\mathbf{9.4}$ & $2976$ & $35.2$ & $1984$ \\
  2 & 50 & out of memory & $\mathbf{192.4}$ & $7956$ & $229.7$ & $5304$ \\
  2 & 70 & out of memory & $1681.7$ & $15336$ & $\mathbf{856.1}$ & $10224$\\
  3 & 5  & $\mathbf{0.06}$ & $0.48$ & $1120$ & $3.23$ & $448$\\
  3 & 10 & $34.7$ & $52.3$ & $5720$ & $\mathbf{20.0}$ & $2288$\\
  3 & 15 &  $1633.3$ & $988.7$ & $16320$ & $\mathbf{85.2}$ & $6528$ \\
  4 & 5  &  $\mathbf{1.5}$ & $1087.8$ & $8820$ & $29.6$ & $2016$ \\
  4 & 10 &  $1357.4$ & out of memory & $70070$ & $\mathbf{181.5}$ & $16016$\\
  4 & 15 &  out of memory & out of memory & $271320$ & $\mathbf{967.8}$ & $62016$\\
\hline
\end{tabular}
\end{table}

                For the final test, we
                applied the homotopy to the model of order $n=17$ from
                \cite[Example 3]{lagauw2023globally}, looking for the best
                reduction of order $m=4$. The computation involving $95760$ paths took $3398$ seconds, compared with $12360$ reported in \cite{lagauw2023globally}.
                In Table \ref{tab:ex_17_4}, we give the poles
                of the five stationary points found with the
                smallest value of $J=\norm{H - \widehat{H}}_{\mathcal{H}_2}$. The list includes a
                point that is not reported in \cite[Table III]{lagauw2023globally}.

\begin{table}[htb!]
\centering
\caption{The poles $p_i$ of the five reduced models of order $4$ with the smallest value
of $J$ for the model from \cite[Example 3]{lagauw2023globally}.\label{tab:ex_17_4}}
 \footnotesize
\begin{tabular}{r|r|r|r} \hline \rule{0pt}{2.6ex}%
$J$ & $p_{1,2}$ & $p_{3,4}$ &
$J$ in \cite{lagauw2023globally}
\\[0.5mm]
\hline \rule{0pt}{2.6ex}%
  $9.0242\cdot 10^{-3}$ & $-0.032009 \pm 78.540 i$ & $-1.1092\cdot 10^{-1} \pm 1.5440\cdot 10^{1}i$ & $9.14\cdot 10^{-3}$\\
  $9.0921\cdot 10^{-3}$ & $-0.032005 \pm 78.540 i$ & $-5.7441\cdot 10^{0\phantom{-}} \pm 5.2614\cdot 10^{1}i$ & $9.22\cdot 10^{-3}$ \\
  $1.0318\cdot 10^{-2}$ & $-0.032012 \pm 78.540 i$ & $-2.3207\cdot 10^{-2} \pm 3.8413\cdot 10^{0}i$ & $1.03\cdot 10^{-2}$ \\
  $1.0410\cdot 10^{-2}$ & $-0.032012 \pm 78.540 i$ & $-3.4262\cdot 10^{-2} \pm 4.9079\cdot 10^{0}i$ & not found\\
  $1.0755\cdot 10^{-2}$ & $-0.032004 \pm 78.540 i$ & $-4.5937\cdot 10^{0\phantom{-}} \pm 1.5917\cdot 10^{1}i$ & $1.09\cdot 10^{-2}$ \\
\hline
\end{tabular}
\end{table}

            \end{example}

    \section{Conclusion and future work}
        \label{sec:conclusion}

        We presented, to the best of our knowledge, the first true homotopy method for rectangular multiparameter eigenvalue problems (rMEPs).
        A key contribution in that effort is the construction of suitable initial problems with the generic number of solutions.
        The accompanying theoretical results provide additional insights into the properties of eigenvalues of rMEPs.

        The novel homotopy method was successfully applied to both linear and polynomial rMEPs with zero-dimensional solution sets.
        Moreover, numerical results indicate that the
        method can be applied to problems with positive-dimensional solution sets, although a rigorous theoretical justification for the positive-dimensional case is still missing.
        This case is particularly relevant for applications in system identification and model order reduction.

        Possible directions for future work include the construction of alternative initial problems for non-generic structures and efficient computation of their solutions, and the development of a supporting theory for problems with positive-dimensional solution sets.

\medskip

\noindent\textbf{Author Contributions}\quad All authors contributed equally to the paper.
\smallskip

\noindent\textbf{Funding}\quad {Funding was provided by Javna agencija za znanstvenoraziskovalno in inovacijsko dejavnost Republike Slovenije (Grant No. P1-0294).}
\smallskip

\noindent\textbf{Code availability}\quad The code and data for numerical examples in this paper are available
at \url{https://github.com/borplestenjak/RMEP_homotopy}.
\smallskip

\section*{Declarations}

\noindent\textbf{Conflict of interests}\quad Not applicable.




    \appendix

\section{Regularity of the homotopy}\label{sec:homotopy_details}

In a similar way as in \cite{DongYuYu_SIMAX} we can show that for a finitely
regular linear rMEP the homotopy paths do not intersect on $t\in[0,1)$.

\begin{lemma}\label{lem:fullrank}
    Let $\rmep{\eig} = \big(\rmat_0 + \sum_{j=1}^k \lambda_j \rmat_j\big)$
    and $\irmep{\eig} = \big(\irmat_0 +  \sum_{j=1}^k \lambda_j\irmat_j\big)$ be finitely regular rMEPs with $(n+k-1)\times n$ matrices.
    Then there exists an open dense full-measure set $\mathcal{U}_1\subset \CC$ such that for
    all $\gamma\in\mathcal{U}_1$
    the rMEP
    \[\mcl{H}(t,\eig) = \bigg(\mt{H}_0(t) + \sum_{j = 1}^k \lambda_j \mt{H}_j(t)\bigg),\]
    where $\mt{H}_j(t) = (1 - t) \gamma \irmat_j + t \rmat_j$ for $j=0,\ldots,k$,
    is finitely regular
    for all $t\in[0,1]$.
\end{lemma}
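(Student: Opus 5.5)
The plan is to reduce finite regularity of $\mcl{H}(t,\eig)$ to a statement about a single polynomial in one variable, by the same ``gamma trick'' as in \cite{DongYuYu_SIMAX}. The rMEP $\mcl{H}(t,\cdot)$ is finitely regular exactly when the pencil $M(t,\eig)\coloneq\sum_{j=1}^k\lambda_j\mt{H}_j(t)$ has full column rank $n$ for every $\eig\ne\vc{0}$. Since $M$ is homogeneous in $\eig$, it suffices to check $\eig$ on the unit sphere, or equivalently $[\eig]\in\CP^{k-1}$. The endpoints are settled by the hypotheses: at $t=0$ we have $M=\gamma\sum_j\lambda_j\irmat_j$, which has full rank for $\gamma\ne0$, and at $t=1$ we have $M=\sum_j\lambda_j\rmat_j$, which has full rank by assumption. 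So only $t\in(0,1)$ matters.

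For $t\in(0,1)$ I would divide by $t$ and set $s=(1-t)\gamma/t$. Then $M(t,\eig)=t\,\big(s\,\irmep{\eig}_1+\rmep{\eig}_1\big)$, where $\irmep{\eig}_1\coloneq\sum_j\lambda_j\irmat_j$ and $\rmep{\eig}_1\coloneq\sum_j\lambda_j\rmat_j$. As $t$ runs over $(0,1)$, the value $s$ runs over the open ray $\{r\gamma: r>0\}$. The goal is therefore to show that the set
\[
S=\{s\in\CC:\ \exists\,\eig\ne\vc{0},\ \rank(s\,\irmep{\eig}_1+\rmep{\eig}_1)<n\}
\]
is finite, or at least contained in a proper algebraic subset of $\CC$. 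Once this is done, a ray $\{r\gamma\}$ avoids $S$ unless $\gamma$ lies on one of the finitely many rays through points of $S$. That union of rays is closed and has measure zero, so its complement $\mathcal{U}_1$ is open, dense and of full measure, as required. (If $\gamma$ is also required to be nonzero, removing $0$ is harmless.)

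To prove that $S$ is finite I would use elimination theory. The incidence variety $Z=\{(s,[\eig])\in\CC\times\CP^{k-1}:\ \text{all }n\times n\text{ minors of } s\,\irmep{\eig}_1+\rmep{\eig}_1\text{ vanish}\}$ is Zariski closed, and the minors are bihomogeneous. Because $\CP^{k-1}$ is complete, the projection of $Z$ to the $s$-line is Zariski closed in $\CC$. A closed subset of $\CC$ is either finite or all of $\CC$. It cannot be all of $\CC$, because $s=0$ is not in $S$: by finite regularity of $\mcl{A}$, the pencil $\rmep{\eig}_1$ has full rank for every $\eig\ne\vc{0}$. Hence $S$ is finite. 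To make this cover the point at infinity as well, I would homogenize in $s$ and work on $\CP^1$, setting $s=[\sigma:\tau]$ and using $\sigma\irmep{\eig}_1+\tau\rmep{\eig}_1$. The finite regularity of $\mcl{B}$ excludes $[1:0]$ and that of $\mcl{A}$ excludes $[0:1]$, so the image is a proper closed subset of $\CP^1$ and therefore finite.

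The main obstacle is making the projection step rigorous in the paper's elementary style. The cleanest route is to cite the main theorem of elimination theory, namely that the image of a projective variety under projection is closed. An alternative I would consider is a resultant or Chow-form argument: when $k\le n$, the condition that the $(n+k-1)\times n$ pencil in $k$ homogeneous variables drops rank somewhere is expressed by the vanishing of a single polynomial in the coefficients (the codimension of the rank-deficient locus is exactly $k$). Restricted to the line $\sigma\irmat_\bullet+\tau\rmat_\bullet$, this polynomial is homogeneous in $(\sigma,\tau)$ and nonzero at $(0,1)$, so it has finitely many roots. I would present the elimination-theory version, since it needs no case distinction.
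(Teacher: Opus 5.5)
Your argument is correct and is essentially the paper's proof. You reparametrize $t\in(0,1)$ by a complex $s$ moving along a ray fixed by $\arg\gamma$, and use the projection of the incidence variety in $\CC\times\CP^{k-1}$ to show that the bad values of $s$ form a proper algebraic, hence finite, subset of $\CC$. You then discard $\gamma=0$, which is genuinely required and not optional (also for $\mathcal{U}_1$ to be open), together with the finitely many corresponding rays.

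The differences are cosmetic. The paper takes $s=t/((1-t)\gamma)$, so finite regularity of $\mathcal{B}$ excludes $s=0$ and that of $\mathcal{A}$ handles $t=1$; your reciprocal choice swaps these roles. You also justify closedness of the projection by the main theorem of elimination theory, where the paper merely asserts it.
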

\begin{proof}
Let $h_{j_1, \ldots, j_n}(t,\eig) \coloneq \det(\mt{H}_{j_1, \ldots, j_n}(t,\eig))$ be an
            $n \times n$ minor of $\sum_{j = 1}^k \lambda_j \mt{H}_j(t)$ composed from the rows
            $1 \le j_1 < \ldots <j_n \le n + k - 1$.
            Then
            \[
                \rank(\lambda_1 \mt{H}_1(t) + \cdots + \lambda_k \mt{H}_k(t))<n
            \]
            if and only if
            $
                \label{eq:eig_system_minors_app}
                h_{j_1, \ldots, j_n}(t,\eig) = 0
            $
            for all $1 \le j_1 < \ldots <j_n \le n + k - 1$.
Since $\mcl{H}(1,\eig)=\rmep{\eig}$ is finitely regular,
we can assume $0\le t<1$,
introduce \begin{equation}\label{eq:s_gamma}
s=\frac{t}{(1-t)\gamma},
\end{equation}
which is well defined for $t\ne 1$ and $\gamma\ne 0$, and write
$\mt{H}_j(t) = (1 - t) \gamma (\irmat_j + s \rmat_j)$. It follows
that $h_{j_1, \ldots, j_n}(t,\eig)=(1-t)^n\gamma^n g_{j_1, \ldots, j_n}(s,\eig)$, where
$g_{j_1, \ldots, j_n}$ is a polynomial in $\lambda_1,\ldots,\lambda_k$ and $s$.
Since we consider $\eig\ne\vc{0}$, we can treat $\eig$ as an element of
the projective space $\CP^{k-1}$ and define an
algebraic set
\[\mathcal{V}_1=\{ (s,\eig)\in \CC\times \CP^{k-1}:\ g_{j_1, \ldots, j_n}(s,\eig)=0\ \textrm{for all }
1 \le j_1 < \ldots <j_n \le n + k - 1\}.
\]
The projection of $\mathcal{V}_1$ onto $\CC$,
\[
\mathcal{S}_1=\{ s\in \CC:\ \textrm{there exists }\eig\in\CP^{k-1}\ \textrm{such that } (s,\eig)\in\mathcal{V}_1\},\]
is an algebraic subset of $\CC$.
Since $\irmep{\eig}$
is finitely regular,  $0\not\in \mathcal{S}_1$ and
$\mathcal{S}_1$ is finite. For a fixed $\gamma\ne 0$ and $0\le t<1$, points \eqref{eq:s_gamma} form a ray and, if we take $\gamma\in\mathcal{U}_1$, where
\begin{equation}\label{alg_set_U}
\mathcal{U}_1=\{\gamma\in\CC:\ \gamma\ne 0\ \textrm{and }\arg(\gamma)\ne -\arg(s)\ \textrm{for all }s\in\mathcal{S}_1
\}\end{equation}
is an open dense full-measure subset of $\CC$,
then all points on the ray miss $\mathcal{S}_1$.
\end{proof}

\begin{lemma}\label{lem:simpleeigs}
Let rMEPs $\rmep{\eig}$ and $\irmep{\eig}$ satisfy the assumptions of
    Lemma \ref{lem:fullrank} and let all
    eigenvalues of $\irmep{\eig}$ be simple.
    Then there exists an open dense full-measure set $\mathcal{U}_2\subset \CC$ such that for
    all $\gamma\in\mathcal{U}_2$ all
    eigenvalues of the
    rMEP $\mcl{H}(t,\eig)$
    are simple for all $t\in[0,1)$.
\end{lemma}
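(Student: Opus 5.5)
We mimic the proof of Lemma \ref{lem:fullrank}. For $t\in[0,1)$ and $\gamma\ne 0$ we use the substitution \eqref{eq:s_gamma}, $s=t/((1-t)\gamma)$, and write $\mcl{H}(t,\eig)=(1-t)\gamma\,\mcl{G}(s,\eig)$ with
\[
\mcl{G}(s,\eig)=\irmat_0+s\rmat_0+\sum_{j=1}^k\lambda_j(\irmat_j+s\rmat_j).
\]
Since $(1-t)\gamma\ne 0$, $\mcl{H}(t,\cdot)$ and $\mcl{G}(s,\cdot)$ have the same eigenvalues with the same multiplicities. It therefore suffices to show that the set
\[
\mathcal{S}_2=\{s\in\CC:\ \mcl{G}(s,\eig)\ \textrm{has a multiple eigenvalue}\}
\]
is finite and does not contain $0$. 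Then we take $\mathcal{U}_2=\mathcal{U}_1\cap\{\gamma\ne0:\ \arg(\gamma)\ne-\arg(s)\ \textrm{for all } s\in\mathcal{S}_2\setminus\{0\}\}$. This set is open, dense and of full measure, because it is $\mathcal{U}_1$ minus finitely many rays. For $\gamma\in\mathcal{U}_2$ the ray $\{t/((1-t)\gamma): t\in[0,1)\}$ misses $\mathcal{S}_2$.

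To show that $\mathcal{S}_2$ is algebraic, we work in $\CC\times\CC^k$ and consider the incidence variety
\[
\mathcal{V}_2=\{(s,\eig):\ g_{j_1,\ldots,j_n}(s,\eig)=0\ \textrm{for all minors, and the Jacobian of the minors w.r.t.\ }\eig\ \textrm{has rank}<k\}.
\]
By the characterization in Section~\ref{sec:jacobian}, $(s,\eig)\in\mathcal{V}_2$ exactly when $\eig$ is an eigenvalue of $\mcl{G}(s,\cdot)$ that is not algebraically simple. The Jacobian rank condition is given by the vanishing of all $k\times k$ minors of the Jacobian, so $\mathcal{V}_2$ is Zariski closed. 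Eigenvalues could escape to infinity as $s$ varies. To prevent this, we use Lemma \ref{lem:fullrank}, or rather its proof: for $s\notin\mathcal{S}_1$ the problem $\mcl{G}(s,\cdot)$ is finitely regular, so no eigenvalues lie at infinity. Equivalently, we homogenize $\eig$ to $\heig\in\CP^k$; on the chart $\lambda_0=0$ the only points over $s\notin\mathcal{S}_1$ are excluded. Projection along the compact factor $\CP^k$ is closed, so the projection $\mathcal{S}_2$ of the homogenized $\mathcal{V}_2$ onto the $s$-line is algebraic. Hence $\mathcal{S}_2$ is either finite or all of $\CC$.

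It remains to show that $0\notin\mathcal{S}_2$. At $s=0$ we have $\mcl{G}(0,\eig)=\irmep{\eig}$, which is finitely regular and has only simple eigenvalues by assumption, so no multiple eigenvalue occurs, even at infinity. This excludes $\mathcal{S}_2=\CC$, so $\mathcal{S}_2$ is finite, and the ray argument finishes the proof.

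The main obstacle is to make precise that ``having a multiple eigenvalue'' is a closed condition in $s$, including eigenvalues that may approach infinity. The count $\binom{n+k-1}{k}$ (with multiplicity) from \cite[Lemma 1]{ShapiroShapiro} gives a cleaner alternative route: for $s\notin\mathcal{S}_1$ the problem has exactly $\binom{n+k-1}{k}$ finite eigenvalues counted with multiplicity. The multiple-eigenvalue locus is then the zero set of a discriminant-type polynomial in $s$, for example the discriminant of the eigenvalues of a generic linear form $\sum c_i\lambda_i$ computed from the commuting matrices $\mt{G}_0^{-1}\mt{G}_i$ in \eqref{deltamep}, which depend polynomially on $s$. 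That polynomial is nonzero at $s=0$, so it has finitely many roots. I would try this discriminant formulation first, since it avoids the compactness issues.
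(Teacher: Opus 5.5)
Your main argument is correct and is essentially the paper's proof. You substitute $s=t/((1-t)\gamma)$ and form the set in $\CC\times\CP^k$ cut out by the vanishing minors $g_{j_1,\ldots,j_n}(s,\heig)$ together with rank $<k$ of their Jacobian with respect to $(\lambda_0,\ldots,\lambda_k)$. You use that its projection $\mathcal{S}_2$ onto the $s$-line is algebraic, exclude $0\in\mathcal{S}_2$ via finite regularity (no eigenvalues at infinity) and simplicity of the eigenvalues of $\irmep{\eig}$, and conclude that $\mathcal{S}_2$ is finite so the ray argument applies. The projective compactness step you were unsure about is exactly how the paper handles eigenvalues escaping to infinity, so the discriminant detour is unnecessary (and there $\mt{G}_0(s)^{-1}\mt{G}_i(s)$ would be rational rather than polynomial in $s$).
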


\begin{proof}
We use similar tools as in the previous proof. We introduce
parameter $s$ as in \eqref{eq:s_gamma} and consider $\mcl{H}(t,\eig)$ in
the homogeneous form
\[ {\mcl{H}}(s,\heig)\rvec  = (1-t)\gamma\left(\sum_{j = 0}^k \lambda_j (\irmat_j+s \rmat_j)\right)\rvec=\vc{0}\]
by introducing parameter $\lambda_0$.

Let $\heig$ be an eigenvalue of ${\mcl{H}}(s,\heig)$. Then
$g_{j_1, \ldots, j_n}(s,\heig)=0$ for all
$1 \le j_1 < \ldots <j_n \le n + k - 1$, where
$g_{j_1, \ldots, j_n}$ is a polynomial in $\lambda_0,\ldots,\lambda_k$ and $s$,
defined similarly to those in the previous proof. In addition, if $\heig$ is a multiple eigenvalue, then the rank of the corresponding Jacobian
$J_g(s,\heig)$,
which is an $\binom{n+k-1}{n} \times (k+1)$ matrix with rows
            \[
                \left[\begin{matrix}
                    \frac{\partial g_{j_1,\ldots,j_n}}{\partial \lambda_0}(\heig) & \cdots & \frac{\partial g_{j_1,\ldots,j_n}}{\partial \lambda_k}(\heig)
                \end{matrix}\right],\quad 1\le j_1<\ldots<j_n\le n+k-1,
            \]
is less than $k$.
Now we define $\mathcal{V}_2\subset \CC\times \CP^{k}$ as the set of all points
$(s,\heig)$ such that $g_{j_1, \ldots, j_n}(s,\heig)=0$ for all
$1 \le j_1 < \ldots <j_n \le n + k - 1$ and $\rank(J_g(s,\heig))<k$.
As these are algebraic constraints, $\mathcal{V}_2$ is algebraic and so is
its projection to $\CC$, defined as
\[\mathcal{S}_2=\{ s\in \CC:\ \textrm{there exists }\heig\in\CP^{k}\ \textrm{such that } (s,\heig)\in\mathcal{V}_2\}.
\]
Since $\irmep{\eig}$ is finitely regular, all
eigenvalues of ${\mcl{H}}(0,\heig)$ are such that $\lambda_0\ne 0$. Since
we assume that all eigenvalues of $\irmep{\eig}$ are simple,
there does not exist $\heig\in\CP^{k}$ such that $(0,\heig)\in\mathcal{V}_2$.
Thus, as before, $0\not\in \mathcal{S}_2$, so $\mathcal{S}_2$ is finite and in a similar way as in
\eqref{alg_set_U} we obtain $\mathcal{U}_2$.
\end{proof}

Suppose that $\rmep{\eig}$ is a finitely regular linear rMEP.
Since singular points of the homotopy from Section \ref{sec:linear} correspond to multiple eigenvalues, it follows that
there exists an open dense full-measure set $\mathcal{U}\subset \CC$ such that for
$\gamma\in\mathcal{U}$ the homotopy paths on $t\in[0,1)$ do not cross and do not contain
any singular points.

\section{Selection of the initial rMEP}\label{sec:optchoice}

We want to select random nonzero entries of the initial linear rMEP in such a way that the
condition numbers of the starting eigenvalues are as small as possible.

We consider the construction \eqref{eq:twopolynomialmatrix} for $k=2$.
If we denote $b_i^{(0)}=\alpha_i$ and $b_i^{(1)}=\beta_i$ for $i=1,\ldots,n$, then
$\eig^{(j\ell)}=(-\alpha_j,-\beta_\ell)$ is an eigenvalue for
 $1\le j\le \ell \le n$.
The corresponding right
eigenvector $\vc{x}^{(j\ell)}$ of size $n$ has the form
\[ \vc{x}^{(j\ell)} = \begin{bmatrix}0 & \cdots & 0 & x_j & \cdots & x_\ell & 0 & \cdots & 0\end{bmatrix}^T,
\]
while orthogonal vectors of size $n+1$ of the form
\[ \vc{y}^{(j\ell)} = \begin{bmatrix}y_1 & \cdots & y_j & 0 & \cdots & 0\end{bmatrix}^T\
\textrm{and}\ \
 \vc{z}^{(j\ell)} = \begin{bmatrix}0 & \cdots & 0 & z_{\ell+1} & \cdots & z_{n+1}\end{bmatrix}^T
\]
span the two-dimensional subspace $\ker(\irmep{\eig^{(j\ell)}}^\herm)$.

The condition number of
$\eig^{(j\ell)}$ is related to
the condition number of the matrix \eqref{genB0_conditionB} from Corollary \ref{thm:simpleeig}, which is
for the eigenvalue $\eig^{(j\ell)}$
 equal to
\[ \begin{bmatrix} {\vc{y}^{(j\ell)}}^*B_1  \vc{x}^{(j\ell)} & {\vc{y}^{(j\ell)}}^*B_2  \vc{x}^{(j\ell)} \cr
                            {\vc{z}^{(j\ell)}}^*B_1  \vc{x}^{(j\ell)} & {\vc{z}^{(j\ell)}}^*B_2  \vc{x}^{(j\ell)}\end{bmatrix} =
                            \begin{bmatrix} \overline{y_j}x_j & 0 \cr 0 & \overline{z_{\ell+1}}x_\ell\end{bmatrix},
\]
and is best conditioned when its diagonal entries have equal absolute value,
i.e., when
\begin{equation}\label{eq:condest}
   \left|\frac{y_jx_j}{z_{\ell+1}x_\ell}\right|\approx 1.
\end{equation}

If we scale $\vc{y}^{(j\ell)}$ so that $y_1=1$, then
\[|y_1|=1,\ |y_2|=\left|\frac{\alpha_1-\alpha_j}{\beta_1-\beta_{\ell}}\right|,\ldots,
|y_j|=\left|\frac{(\alpha_1-\alpha_j)\cdots(\alpha_{j-1}-\alpha_j)}{(\beta_1-\beta_{\ell})\cdots
(\beta_{j-1}-\beta_\ell)}\right|.
\]
In a similar way, if we scale $\vc{z}^{(j\ell)}$ so that $z_{n+1}=1$, then
\[ |z_{\ell+1}|=\left|\frac{(\beta_{\ell+1}-\beta_\ell)\cdots(\beta_{n}-\beta_\ell)}
{(\alpha_{\ell+1}-\alpha_j)\cdots(\alpha_{n}-\alpha_j)}\right|,\ldots,
|z_n|=\left|\frac{\beta_n-\beta_\ell}{\alpha_n-\alpha_j}\right|,
|z_{n+1}|=1.
\]
Finally, if we scale $\vc{x}^{(j\ell)}$ so that $x_j=1$, then
\[
|x_\ell|=\left|\frac{(\beta_j-\beta_{\ell})\cdots
(\beta_{\ell-1}-\beta_\ell)}{(\alpha_{j+1}-\alpha_j)\cdots(\alpha_{\ell}-\alpha_j)}\right|.
\]
The expression \eqref{eq:condest}, after we normalize all vectors, is equal to
\begin{equation}\label{eq:condjl}
\frac{|y_1^{(j\ell)}|}{|z_{n+1}^{(j\ell)}|}
\left|
\frac{\prod_{i=1,\ i\ne j}^n(\alpha_i-\alpha_j)}
{\prod_{i=1,\ i\ne \ell}^n(\beta_i-\beta_\ell)}
\right|.
\end{equation}
We see that \eqref{eq:condjl} might be very small or large if there exist $i\ne j$
such that $|\alpha_i-\alpha_j|\ll 1$ or $k\ne \ell$ such that $|\beta_k-\beta_\ell|\ll 1$.
The heuristic approach based on the above is
to select $\alpha_1,\ldots,\alpha_n$ and $\beta_1,\ldots,\beta_n$ as a random permutation of the $n$-th roots of unity, multiplied by a random unimodular complex scalar.

A straightforward generalization of this selection performs well for rMEPs with more than two parameters.

\end{document}